\documentclass[reqno,10pt]{amsart}

\usepackage[utf8]{inputenc,xcolor}

\newcommand{\tensor}{\otimes}

\newcommand{\op}{\mathcal}

\newcommand{\cdc}{,\dots,}
\newcommand{\tdt}{\tensor\dots\tensor}
\newcommand{\oneton}{\{1\cdc n\}}

\usepackage{amsmath}%
\usepackage{amsthm}
\usepackage{amsfonts}%
\usepackage{amssymb}%
\usepackage{graphicx}
\usepackage{xy,amsthm,enumerate,xypic,array}  

\input{xy}
\xyoption{all}

\numberwithin{equation}{section}

\newtheorem{theorem}{Theorem}[section]
\theoremstyle{plain}

\newtheorem{corollary}[theorem]{Corollary}
\newtheorem{lemma}[theorem]{Lemma}

\theoremstyle{definition}
\newtheorem{definition}[theorem]{Definition}

\newtheorem{remark}[theorem]{Remark}

\allowdisplaybreaks[2]


\setcounter{tocdepth}{1}
\addtocounter{MaxMatrixCols}{2}


\begin{document}

\title{Koszul Feynman Categories} 
\author{Ralph M. Kaufmann and Benjamin C. Ward}
\email{rkaufman@math.purdue.edu, benward@bgsu.edu}


\begin{abstract}  A cubical Feynman category, introduced by the authors in previous work, is a category whose functors to a base category $\op{C}$ behave like operads in $\op{C}$.  In this note we show that every cubical Feynman category is Koszul.  The upshot is an explicit, minimal cofibrant resolution of any cubical Feynman category, which can be used to model $\infty$ versions of generalizations of operads for both graph based and non-graph based examples.\end{abstract}

\maketitle

\section{Introduction}

The bar and cobar constructions for algebras and operads are central tools in homotopical algebra.  They often permit the construction of resolutions which are both cofibrant (hence homotopy invariant) and minimal (hence tractable).

Bar-cobar duality was developed first for categories of algebras over certain operads, where it was central to the study of rational homotopy theory.  This duality was then lifted to the category of the operads themselves \cite{GK}, which proved a fruitful perspective not only for resolving algebraic structures but also to give a conceptual explanation for bar/cobar duality one level down.  The key notion of {\it op.\ cit.\ }is that of a Koszul operad, which ensures that such a resolution can be constructed via the (co)bar construction of its Koszul dual.

More recently there has emerged a need to lift these results still one level higher to capture a tractable, homotopy invariant generalization of the notion of operads themselves.  In addition to operads, a plethora of generalizations of operad-like structures has arisen in recent years and in each case it would be desirable to have access to such a homotopy-invariant analog.

One is thus faced with the following fundamental questions:

\begin{enumerate}
	\item What category of objects encodes operads and generalizations of operads?
	\item Can one construct cofibrant bar/cobar resolutions in such a category?
	\item Which objects in such a category are Koszul?
\end{enumerate}

The first question has been satisfactorily resolved through the work of many authors, who have given independent but not unrelated answers to this question.  These answers include the notions of patterns \cite{Getops}, groupoid colored operads \cite{Pet}, operadic categories \cite{BM0}, as well as our notion of Feynman categories \cite{KW}.  

For both the second and third question, the key notion which makes our answer possible is that of a {\it cubical} Feynman category \cite[Definition 7.2.2]{KW}.  Roughly speaking a cubical Feynman category is a Feynman category where the morphisms can be given a compatible degree.  In the Feynman category encoding operads this degree counts the number of edges in a tree, but in general a cubical Feynman category needn't have any underlying connection to graphs.

In \cite{KW} we defined bar and cobar constructions for representations of a cubical Feynman category and also showed that the category of representations valued in differential graded vector spaces over a field of characteristic $0$ is a model category for which the cobar construction is cofibrant. We may therefore provide an answer to question (2) above by encoding a Feynman category as a representation of a cubical Feynman category.  This is not particularly difficult, and we demonstrate how to do this in Section $\ref{FCSec}$.  The upshot is that the linearization of any Feynman category is an object in a model category admitting bar/cobar duality, which provides an affirmative answer to question (2).

The main purpose of this article is to answer the third question.  We prove:

\begin{theorem}\label{thmintro}  Every cubical Feynman category is Koszul.
\end{theorem}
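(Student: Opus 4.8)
The strategy is to reduce Koszulness to a classical computation in combinatorial topology. Recall that a quadratic (cubical) Feynman category $\mathfrak F$ is Koszul precisely when the reduced bar construction of \cite{KW}, applied to the linearization of $\mathfrak F$, has homology concentrated on the diagonal: the homological (bar) degree must agree with the cubical degree of the morphisms out of which the bar complex is built. The plan is therefore (i) to record that a cubical Feynman category is quadratic, with generators the degree-one morphisms and relations the commuting squares sitting inside the two-dimensional faces of the cubes; (ii) for a morphism $\phi$ of cubical degree $n$, to identify the summand of the bar complex indexed by $\phi$ with a shift of the reduced simplicial chain complex of the order complex $\Delta(\mathcal P(\phi))$, where $\mathcal P(\phi)$ is the poset of nontrivial factorizations of $\phi$ ordered by refinement; (iii) to use the cube axiom of \cite[Definition 7.2.2]{KW} to identify $\mathcal P(\phi)$ with the proper part $\bar B_n$ of the Boolean lattice on an $n$-element set; and (iv) to invoke the classical fact that $\Delta(\bar B_n)$ is the barycentric subdivision of $\partial\Delta^{n-1}$, hence homeomorphic to $S^{n-2}$, so that its reduced homology is one-dimensional, concentrated in degree $n-2$. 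Under the dictionary of (ii), bar degree $k$ corresponds to simplicial degree $k-2$, so this says exactly that the bar homology of $\mathfrak F$ sits on the diagonal, which is Koszulness.

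Step (ii) is bookkeeping: a length-$k$ chain of composable non-identity morphisms composing to $\phi$ is the same as a flag $\hat 0 = S_0 \subsetneq \dots \subsetneq S_k = \hat 1$ of partial factorizations (``stages''); the bar differential merges adjacent stages with alternating signs while fixing the outer terms $S_0$ and $S_k$, which is precisely the augmented simplicial boundary on $\Delta(\mathcal P(\phi))$. One must also account for the groupoid of isomorphisms intrinsic to a Feynman category: the automorphisms of $\phi$ act on this complex, but over a field of characteristic zero passing to (co)invariants is exact, so concentration of homology in a single degree is preserved; and the cube axiom ensures the bar complex respects the monoidal structure, so that it splits as the expected product of ``parallel'' factorization complexes and one reduces to the case of $\phi$ with indecomposable target.

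The heart of the argument, and the expected main obstacle, is step (iii): showing that the cubical structure forces $\mathcal P(\phi)$ to be \emph{exactly} the proper part of a Boolean lattice. The definition of a cubical Feynman category attaches to $\phi$ an $n$-cube of decompositions, but this is a priori local data. One must promote it to the global statement that every nontrivial factorization of $\phi$ comes from a face of this cube (completeness), that distinct faces yield distinct factorizations, so the cube does not fold (non-degeneracy), and that the refinement order on factorizations is the face order; equivalently, that the maximal decompositions of $\phi$ are connected through the elementary commutation moves coming from the two-faces, with no further coincidences. Granting this, the remaining pieces — the homeomorphism $\Delta(\bar B_n)\cong S^{n-2}$, the sign computation in (ii), and characteristic-zero exactness — are standard, and the quadratic presentation of (i) falls out as the case $n=2$.

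Finally, I would note two by-products. First, step (iii) makes the promised resolution explicit: the Koszul dual of $\mathfrak F$ is the cofree ``cocubical'' object whose value on a degree-$n$ morphism is $\tilde H_{n-2}(\Delta(\bar B_n))\cong\mathbb K$, a one-dimensional sign representation in each cubical degree, and its cobar construction is the minimal model. Second, one can give a homology-free variant: the same cube axiom realizes the Koszul complex of $\mathfrak F$, summand by summand, as a complex built from the cells of a genuine $n$-cube, and an explicit contracting homotopy — morally, the contraction of the cube onto a vertex by ``inserting a distinguished first edge'' — proves acyclicity directly, the characteristic-zero averaging again removing the dependence on the choice of vertex.
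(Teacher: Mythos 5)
Your plan hinges on step (iii): identifying the poset $\mathcal P(\phi)$ of nontrivial factorizations with the proper part $\bar B_n$ of the Boolean lattice, so that its order complex is a barycentrically subdivided $\partial\Delta^{n-1}\cong S^{n-2}$. You correctly flag this as the main obstacle and note it requires ``non-degeneracy'' (distinct faces of the cube give distinct factorizations). But this is precisely the condition that \emph{fails} in general, and the paper says so explicitly in the introduction: ``In a cubical Feynman category such edge paths need not correspond to distinct operations.'' The cubical axiom guarantees a \emph{free} $S_n$ action on chains of degree-$0/1$ morphisms, so the $n!$ maximal chains composing to $\phi$ are distinct as chains; it says nothing that would prevent \emph{intermediate} factorizations into morphisms of degree $>1$ from coinciding, and they do coincide even in the flagship examples. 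For the Feynman category encoding operads, the fiber of the Koszul map over a degree-$n$ morphism $\phi$ is the graph associahedron attached to $\phi$ (as the introduction recalls from \cite{WardMP}, \cite{MB2}), not a subdivided simplex boundary; that is, $\mathcal P(\phi)$ is strictly larger than $\bar B_n$. A further, smaller mismatch: since the linearization $\mathbb F$ is a $\mathsf{T_V}$-op, its bar and cobar constructions are indexed by \emph{trees}, not linear flags of factorizations, so even the bookkeeping in your step (ii) is not a direct identification with an order complex.

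The paper sidesteps the identification entirely. Rather than recognizing the fiber $C_\ast(\phi)$ as a known space, it filters by \emph{width} (the degree of the morphism labeling the root vertex) and runs the resulting spectral sequence, inducting on the degree of $\phi$. The $E^0$ rows split over subsets $X\subset\{1,\dots,n\}$ indexing the label at the root, and by the inductive hypothesis each summand has one-dimensional homology; the $E^1$ page is then the simplicial chain complex of $\Delta^{n-1}$, and $E^2$ is a point (Lemma 4.2). This argument is indifferent to how much the permutohedron of maximal chains collapses onto $C_\ast(\phi)$: the fiber is a \emph{blow-down} of a permutohedron, possibly far from $\partial\Delta^{n-1}$, but the width filtration sees through the collapse. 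Section 2, treating the permutohedron itself, is the maximal (no-collapse) case and the template for the general argument. If you want to keep a poset-topological argument, the statement you would actually need is that $\mathcal P(\phi)$ is Cohen--Macaulay of the right dimension (equivalently that the fiber is a homology ball), and showing that from the cubical axioms is essentially the whole theorem; there is no Boolean-lattice shortcut.
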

This reduces the question of Koszulity to a verifiable condition about degrees of morphisms.  This result allows us to construct a minimal cofibrant resolution of any cubical Feynman category, including for familiar graph based generalizations of operads, but its applicability is not confined to graph based examples.  These resolutions in turn model $\infty$ versions of the original structure, thereby encoding notions of
$\infty$-operads, $\infty$-cyclic operads, $\infty$-(wheeled) properads, $\infty$-modular operads, etc.\ in particular cases of interest.

Theorem $\ref{thmintro}$ has been proven in several particular graph based examples, e.g.\ \cite{WardMP}, \cite{MB2} and \cite{MB3}.  In these examples, the Koszul property reduces to a contractibility of a fiber which may be identified with the graph associahedron (after \cite{CDev}) of a dual graph, see \cite[Section 3.4]{WardMP}.  However, our proof is a novel and comparably elementary one, even for these particular cases.  To highlight the idea of this proof, in Section $\ref{permsec}$ we give a self-contained and elementary spectral sequence argument showing the permutohedra are acyclic.  Recall a permutohedron is a polytope whose vertices may be viewed as directed edge paths connecting opposite vertices of a hyper-cube (whence the terminology ``cubical'').   In a cubical Feynman category such edge paths need not correspond to distinct operations, but the above mentioned argument readily generalizes to account for this.   Thus we realize the fiber of the Koszul map not as the blow-up of a simplex, but rather as the blow-down of a permutohedron, a perspective which simplifies the needed Koszulity argument considerably.

\subsection*{Conventions} We work over a field $k$ with $\text{char}(k)= 0$.  Chain complexes are graded homologically.  Given a set $A$ we write Det$(A)$ for the top exterior power of the span of $A$.  We write $S_n$ for the symmetric group of automorphisms of $\oneton$.  By a mod 2 order on a set we refer to an orbit of a totally ordered set by the action of the subgroup of even permutations.  For $r\in \mathbb{Z}$, we write $\Sigma^r$ for the operator which shifts a graded vector by $r$.  Explicitly $(\Sigma^r V)_m = V_{m-r}$ and write $\Sigma:=\Sigma^1$.


\section{From Permutohedra to Simplices.}\label{permsec}
We begin with a warm-up version of our main theorem which requires no operadic prerequisites.  It concerns a well known family of polytopes, the permutohedra.  Let us give an elementary argument that the permutohedron is acyclic.  We emphasize the purpose of this section is not to derive the result -- which is well known --  but rather to highlight an algebro-combinatorial proof of this result which can be generalized to prove our main theorem.

By a totally ordered (TO) partition we refer to a partition along with a total order on its set of blocks.
\begin{definition}\label{pdef}  Define $C_\ast(P_n)$ to be the following chain complex.  A basis of $C_r(P_n)$ is given by the TO partitions of $\oneton$ having $n-r$ blocks.  In particular $C_r(P_n)$ is non-zero for $0 \leq r \leq n-1$.
	The differential $C_r(P_n) \stackrel{d}\to C_{r-1}(P_n)$ is given by a signed sum of ways to subdivide blocks in a given TO partition, The signs in the differential are determined by 
	  the Koszul sign rule, where the symbol $>$ has degree $1$, and is added from the right.  For example:
\begin{eqnarray}
d(\{1,3\}>\{2,4\}) &  = & -\left(\{1\}>\{3\}>\{2,4\}\right) - (\{3\}>\{1\}>\{2,4\}) \nonumber \\ \nonumber &&  + (\{1,3\}>\{2\}>\{4\}) + (\{1,3\}>\{4\}>\{2\}).	
\end{eqnarray}	  
  \end{definition}
Using this convention, one easily verifies that:
\begin{lemma} As defined above, $(C_\ast(P_n),d)$ is indeed a chain complex, i.e.\ $d^2=0$.  
\end{lemma}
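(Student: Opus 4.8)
The plan is to prove $d^{2}=0$ by a direct sign‑bookkeeping argument: I will show that every TO partition occurring in $d^{2}(\pi)$ occurs in fact exactly twice, once for each of two possible orders of refinement, with the two occurrences carrying opposite signs.

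First I would rewrite the differential in indexed form. For a TO partition $\pi=(B_{1}>\dots>B_{k})$ of $\oneton$ and $1\le i\le k$, let $\delta_{i}(\pi)$ denote the sum, over all ordered pairs $(C,D)$ of nonempty sets with $C\sqcup D=B_{i}$, of the TO partition $(B_{1}>\dots>B_{i-1}>C>D>B_{i+1}>\dots>B_{k})$. Inserting the new symbol $>$ between $C$ and $D$ means, under the Koszul sign rule, sliding it in from the left past the $i-1$ preexisting symbols $>$ (each of degree $-1$) and past the degree‑zero blocks; hence $d(\pi)=\sum_{i=1}^{k}(-1)^{i-1}\delta_{i}(\pi)$, and in particular the sign attached to a refinement depends only on the position of the refined block, not on the chosen bipartition. (One checks against the example in the definition that this is the intended convention.)

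Next I would compute $d^{2}(\pi)$ and classify its summands. A summand of $d^{2}(\pi)$ is a TO partition $\sigma$ obtained from $\pi$ by two successive refinements, so $\sigma$ has $k+2$ blocks, and the second refinement either splits one of the two blocks produced by the first (Case~1) or splits a block of $\pi$ left untouched by the first (Case~2); these possibilities are mutually exclusive and exhaustive. In Case~1 a single block $B$ of $\pi$, say at position $i$, becomes three consecutive blocks $C_{1}>C_{2}>C_{3}$ of $\sigma$, and $\sigma$ is reached in exactly two ways, namely $B\to C_{1}>(C_{2}\sqcup C_{3})\to C_{1}>C_{2}>C_{3}$ and $B\to(C_{1}\sqcup C_{2})>C_{3}\to C_{1}>C_{2}>C_{3}$; tracking positions, the first contributes the sign $(-1)^{i-1}(-1)^{(i+1)-1}$ and the second the sign $(-1)^{i-1}(-1)^{i-1}$, which are opposite. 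In Case~2 two distinct blocks of $\pi$, at positions $i<j$, are each split once, and $\sigma$ is reached in exactly two ways according to which is split first: splitting position $i$ first moves the $j$‑th block to position $j+1$ and yields the sign $(-1)^{i-1}(-1)^{j}$, while splitting position $j$ first leaves the $i$‑th block at position $i$ and yields $(-1)^{j-1}(-1)^{i-1}$, and these too are opposite. Thus all summands of $d^{2}(\pi)$ cancel in pairs and $d^{2}=0$.

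The only real content is the sign bookkeeping, and the single point demanding care is the index shift: a refinement performed at position $i$ pushes every later block from position $j$ to position $j+1$, so the sign of a subsequent refinement must be read off from the intermediate partition rather than from $\pi$. Once that is respected, as in the two computations above, the pairwise cancellation is immediate, so I do not anticipate any genuine obstacle beyond this routine care.
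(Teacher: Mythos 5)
Your proof is correct and follows essentially the same route as the paper's: both classify the terms of $d^2(\pi)$ into the two cases (one block split twice, two distinct blocks split), pair each term with the one obtained by refining in the opposite order, and verify opposite signs. The only difference is cosmetic: the paper compares signs by moving both added $>$ symbols to the far left and observing that the two pairings differ by transposing them, whereas you make the same cancellation explicit via the indexed formula $d(\pi)=\sum_i(-1)^{i-1}\delta_i(\pi)$ and a careful tracking of the position shift, which amounts to unpacking the same Koszul-sign observation.
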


The chain complex $(C_\ast(P_n),d)$ is isomorphic to the cellular chains on the permutohedron on $n$ letters $P_n$, so we call a basis element a cell.  A commonly used notation for such a cell is $13|24$, see e.g.\ \cite{KZhang} and references therein.  The current notation emphasizes the order on the blocks, which determines the signs in the differential in Definition $\ref{pdef}$. 
  Note that while these signs are natural in our context, they do not arise from a choice of orientation of the cells of the permutohedron $P_n$.  

Define the {\it width of a cell} to be the cardinality of its least block.  
The differential of $C_\ast(P_n)$ can not increase the width, so there is a filtration $C_\ast(P_n)^{1} \subset C_\ast(P_n)^{2} \subset ...\subset C_\ast(P_n)^{n} = C_\ast(P_n)$, where $C_\ast(P_n)^{t}$ denotes the subcomplex spanned by cells of width $\leq t$.    The width filtration was studied topologically under the name initial branching number in \cite{KZhang}.

Consider the spectral sequence associated to this filtration.  The $E^0$ page is bigraded by $p$, the width, and $q$, the cellular degree minus the width.  It has differential $d^0$ which sees only those terms that preserve width (as opposed to lower it).  For example, when $n=4$ the $E^0$ page is comprised of four rows of total dimension $1+4+6(1+2)+4(1+6+6)$ arranged by the shape of the partitions as:
\smallskip
\begin{center}
	\begin{tabular}{l|c|c|c|c||l}
		$_p$ \textbackslash $^{p+q}$ & 3 & 2 &  1 & 0  & row is \\ \hline
	    4 & (4) & - & - & -&  $\Sigma^{3}$ a point \\
		3 & - &	(1,3) & - & - & $\Sigma^{2}$ 4 points \\
		2 & -  &	(2,2) & (1,1,2) & - & $\Sigma$ 6 intervals  \\
		1 & - &	(3,1) & (2,1,1) and (1,2,1) & (1,1,1,1) & 4 hexagons\\
	\end{tabular}
\end{center}
\smallskip
Here the shape of a TO partition is a tuple of integers in parentheses indicating the sizes of the ordered list of blocks, with least block on the right by convention.  See the left hand side of Figure $\ref{fiber2}$ for a picture of this filtration.



The number of summands in the splitting of a row is given by a choice of least block, hence the binomial coefficients.  Inductively, each row has homology only on the diagonal, hence the $E^1$ page is a single chain complex whose dimensions match those of an $n-1$ simplex.  Moreover the differential on the $E^1$ page is given by a signed sum of ways to split off an element from the least block, and so matches the differential in the simplicial chain complex of the simplex.

More precisely, this spectral sequences has the following form.   Since this Lemma is a special case of Lemma $\ref{widthfiltration}$ proven below, we defer the proof.


\begin{lemma}\label{Plem}  The spectral sequence $E$ associated to the filtered chain complex $C_\ast(P_n)$ has the following form:
	\begin{enumerate}
		\item There is an isomorphism of chain complexes:
		$ E^0_{p,\ast} \cong \displaystyle\bigoplus_{\substack{Y\subset \oneton \\ n-|Y| = p}} \Sigma^{-1} C_\ast(P_{Y}).$
		
		\item In particular $E^1_{p,q}$ has rank ${n \choose p}$ if $q=-1$ and is $0$ otherwise.

		\item The complex $(E^1_{p,-1}, \partial_1)$ is isomorphic to the standard simplicial chain complex of an $n-1$ simplex.

		\item In particular $E^2_{p,q}$ has rank 1 if $p=1$ and $q=-1$ and is $0$ otherwise.
	\end{enumerate}
\end{lemma}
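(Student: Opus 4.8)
We prove the four parts in sequence, as each feeds the next. For part (1), the point is that a cell of $C_\ast(P_n)$ of width $\leq t$ which is \emph{not} of width $< t$ has a unique least block $Y$ with $|Y| = t$, and when we restrict the differential $d^0$ to width-preserving terms it never touches this block. Writing a cell with least block $Y$ as $\tau > Y$ where $\tau$ is a TO partition of $\oneton \setminus Y$, the map $\tau > Y \mapsto \tau$ identifies the width-$p$ part of $E^0$ with $\bigoplus_Y C_\ast(P_{\oneton \setminus Y})$; the extra $>$ symbol separating $\tau$ from $Y$ accounts for the degree shift $\Sigma^{-1}$, and since $d^0$ only subdivides blocks of $\tau$ it becomes the differential of $C_\ast(P_{\oneton\setminus Y})$ up to the sign bookkeeping of that trailing $>$, which is constant on each summand. (Note $|\oneton\setminus Y| = n - p$, not $p$ — I would double-check the summation index convention against the displayed statement, writing $Y \subset \oneton$ with $|Y| = p$ if that is the intended labeling; the content is the same.)

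For part (2), I would invoke inductively the statement of Lemma~\ref{Plem}(4) itself: $C_\ast(P_m)$ is acyclic for $m \geq 1$ with a single class in degree $0$ — equivalently $\Sigma^{-1}C_\ast(P_m)$ has homology $k$ concentrated in degree $-1$. The base case $P_1$ (and $P_0$) is a point. Feeding this into part (1) gives $E^1_{p,\ast} = H_\ast(E^0_{p,\ast}) \cong k^{\binom{n}{p}}$ concentrated in $q = -1$, the binomial coefficient counting the choices of least block $Y$ of size $p$ (or $n-p$, per the convention fixed above). For part (3), the $d^1$ differential is the connecting map on these homology classes; tracing through, it is induced by the part of the original $d$ that \emph{does} lower the width, i.e.\ splits a single element off the least block $Y$ to form a new least block of size one. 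This is exactly the face-map structure of the standard simplex on $n$ vertices: the generator indexed by $Y$ maps to the alternating sum of generators indexed by the $Y \setminus \{i\}$, with Koszul signs matching the simplicial sign convention. Then part (4) is the computation of simplicial homology of $\Delta^{n-1}$: it is $k$ in the bottom degree and $0$ elsewhere, which after the degree shift lands in $p=1$, $q=-1$.

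**Main obstacle.** The genuinely delicate step is part (3): verifying that the $E^1$-differential, which a priori is only defined as a zig-zag $E^1_{p,-1} \to E^0_{p,-2} \xrightarrow{\text{lift}} \dots$ in the spectral sequence machinery, is \emph{literally} the simplicial differential including correct signs. The width-lowering terms of $d$ split off singletons from $Y$, but they also may subdivide other blocks; one must check that after passing to homology only the "split a singleton off $Y$" contributions survive, and that the Koszul sign attached to inserting that new least singleton block — computed by commuting its $>$ symbol past everything to its left — reproduces the $(-1)^i$ of the simplicial face map under a fixed identification of $Y$ with an ordered set. This is a finite but fiddly sign chase, and it is the crux on which the whole inductive collapse of the spectral sequence rests.
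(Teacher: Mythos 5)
Your approach is the one the paper itself uses: the paper does not give a separate proof of Lemma~\ref{Plem} but derives it as a special case of Lemma~\ref{widthfiltration}, whose proof is exactly this width-filtration induction --- split $E^0$ by the least block, identify each summand with a (shifted) lower-rank complex, feed in the inductive homology computation, and recognize the $E^1$-differential as simplicial. So the route is the same; the content and structure of your argument are correct.

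One slip is worth flagging in part (3). You describe the relevant width-lowering term of $d$ as ``splits a single element off the least block \ldots{} to form a new least block of size one.'' A new least block of size one drops the width all the way to $1$, which for $p>2$ lands in $F_1$ and hence is not a $d^1$ term at all. The subdivision that lowers the filtration by exactly $1$ is $X\mapsto\{i\}>(X\setminus\{i\})$: the expelled singleton sits \emph{to the left}, and the new least block is $X\setminus\{i\}$ of size $p-1$ (the paper phrases this as ``expel an element from $X$''). Your ensuing formula --- the generator indexed by $X$ maps to the alternating sum of generators indexed by $X\setminus\{i\}$ --- is the correct one, so the error is a description slip and doesn't propagate, but it contradicts the formula you then write and should be corrected. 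On your ``main obstacle'': if one takes the preferred $d^0$-cycle representative in which every non-least block is a singleton, there are no width-preserving subdivisions at all, so $d^1$ is literally the width-drop-by-exactly-one piece of $d$ and no zig-zag lifting is needed; the remaining sign check is the finite computation of the Koszul sign from inserting the new $>$ in front of the singleton $\{i\}$ together with the reordering of the singleton prefix to a fixed reference order, exactly as you anticipate.
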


This Lemma makes possible a comparison between $C_\ast(P_n)$ and the simplicial chains on an $n-1$ simplex, albeit with less familiar sign conventions, as we now explain.  Let $C_\ast(\Delta^{n})$ denote the standard simplicial chain complex of an $n$-simplex.  It is spanned by non-empty subsets $X\subset \oneton$, with differential given as the alternating sum of removing an element.  Next, let $C^-_\ast(\Delta^{n})$ denote the ``odd simplicial chain complex'' of an $n$-simplex by which we mean the following.  It is spanned by pairs $(Y, \succ_Y)$ where $Y$ is a subset of $\oneton$ with non-empty complement and $\succ_Y$ is a total order on $Y$.  Two such elements are related by the sign of the permutation which exchanges their order.  The degree of such an element is $n-1-|Y|$.   Finally the differential is given by the sum of all ways to add an element in $Y^c$ as the least (right most) element to the ordered set $(Y, \succ_Y)$.  This differential is square zero because adding two elements of the complement in both orders result in terms which sum to zero.

It is an enjoyable exercise to construct an isomorphism 
\begin{equation}\label{deltaiso}
C_\ast(\Delta^{n}) \stackrel{\cong}\to C^-_\ast(\Delta^{n})
\end{equation}
by sending a set $X$ to a signed multiple of its complement (with the standard linear order).  This sign is the sign of the unique permutation which moves the elements of $X$ to the right of the elements of $Y=X^c$, while preserving the order of both subsets.

There is a surjective map of chain complexes $C_\ast(P_n)\twoheadrightarrow C^-_\ast(\Delta^{n-1})$ given by sending a TO partition to zero unless all of its blocks except possibly the least block are of size $1$ -- in which case it is sent to the complement of its least block, with the order induced from the TO partition.  This is a chain map because splitting off a block of size $1$ from the least block places this new block in the rightmost (least) position among the size $1$ blocks.


Composing this morphism with isomorphism in Equation $\ref{deltaiso}$ and appealing to Lemma $\ref{Plem}$ we have:
\begin{corollary}\label{PtoS}  The composition map $C_\ast(P_n)\twoheadrightarrow C_\ast(\Delta^{n-1})$ is a quasi-isomorphism.  In particular $C_\ast(P_n)$ has the homology of a point.
\end{corollary}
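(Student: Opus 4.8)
The plan is to deduce Corollary~\ref{PtoS} directly from Lemma~\ref{Plem} by comparing the spectral sequence $E$ with the (trivial) filtration spectral sequence of $C_\ast(\Delta^{n-1})$. First I would make the surjection $\pi\colon C_\ast(P_n)\twoheadrightarrow C_\ast(\Delta^{n-1})$ explicit and check it is a chain map: a TO partition survives only if all blocks except the least have size one, and the differential can only split off singletons from the least block without leaving this class, so $\pi$ commutes with $d$ up to the usual sign bookkeeping. Next I would note that $\pi$ is filtered if we give $C_\ast(\Delta^{n-1})$ the trivial filtration placing the basis element indexed by a subset $Y$ in filtration degree $p=n-|Y|$; with this convention the induced map on $E^0$ pages is exactly the projection $\bigoplus_{Y}\Sigma^{-1}C_\ast(P_Y)\twoheadrightarrow \bigoplus_{Y}\Sigma^{-1}C_\ast(P_{\{\ast\}})$ onto the top cell of each small permutohedron factor (equivalently, onto the summands with $q=-1$ after shift).

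Then I would invoke parts (1)--(2) of Lemma~\ref{Plem}: each summand $\Sigma^{-1}C_\ast(P_Y)$ has homology concentrated in a single degree (since $C_\ast(P_Y)$ has the homology of a point, by downward induction on $n$ — the base case $n=1$ is trivial), and that homology is precisely the top cell, which is what $\pi$ picks out. Hence $\pi$ induces an isomorphism on $E^1$ pages. By part (3) the resulting complex $(E^1_{\ast,-1},\partial_1)$ is the simplicial chain complex of $\Delta^{n-1}$ on both sides and $\pi$ is the identity there, so $\pi$ is an isomorphism on $E^2$, and by part (4) both spectral sequences degenerate at $E^2$ with a single $\mathbb{Q}$ in bidegree $(1,-1)$. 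Since both filtrations are bounded, the comparison theorem for spectral sequences gives that $\pi$ is a quasi-isomorphism, and the computation of $E^2$ identifies $H_\ast(C_\ast(P_n))$ with $H_\ast(\Delta^{n-1})$, the homology of a point.

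The only genuinely delicate point is the induction: Lemma~\ref{Plem}(1) expresses $E^0_{p,\ast}$ as a sum of shifted chain complexes of \emph{smaller} permutohedra, so concluding that each row has homology only on the diagonal (Lemma~\ref{Plem}(2)) already uses the corollary for $P_Y$ with $|Y|<n$. I would therefore phrase the whole argument as a simultaneous induction on $n$: assume $C_\ast(P_m)$ has the homology of a point for all $m<n$, feed this into Lemma~\ref{Plem}(2) to collapse the $E^1$ page, use (3) and (4) to finish, and observe that the base case $n=1$ (a single point, one cell in degree $0$) is immediate. Everything else is a routine sign check and an appeal to the standard (bounded) spectral sequence comparison theorem, so I do not expect any further obstacle.
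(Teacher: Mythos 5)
Your argument is in substance the paper's own: the paper states that Corollary~\ref{PtoS} is immediate from Lemma~\ref{Plem}, and you have made the implicit filtered-map comparison explicit. Two small bookkeeping slips, neither fatal. The filtration you put on $C_\ast(\Delta^{n-1})$ is inverted: a cell indexed by a subset $B$ with $|B|=p$ (cellular degree $p-1$) must lie in filtration level $p=|B|$, not $n-|B|$, because the only cells of $P_n$ mapping onto it have least block $B$ and hence width $p$. Relatedly, the restriction of $\pi$ to the summand $\Sigma^{-1}C_\ast(P_Y)$ of $E^0_{p,\ast}$ (where $Y=\oneton\setminus B$) is not projection onto the \emph{top} cell of $P_Y$ but the augmentation onto its $0$-cells: the surviving cells of $P_n$ have shape $(1,\dots,1,p)$, which under the identification with $C_\ast(P_Y)$ are vertices, not the top cell. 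This is harmless, since by Lemma~\ref{Plem}(2) the homology of $\Sigma^{-1}C_\ast(P_Y)$ is concentrated in $q=-1$, i.e.\ cellular degree $0$ of $P_Y$, and the augmentation is an isomorphism on $H_0$, so your $E^1$ comparison goes through unchanged. Finally, your circularity worry is a fair one, but the paper discharges it differently: Lemma~\ref{Plem} is proved as a special case of Lemma~\ref{widthfiltration}, whose proof is itself an induction on the degree of $\phi$, so the corollary can consume Lemma~\ref{Plem} as a black box. Your alternative of running a simultaneous induction on $n$ at the level of the corollary is also valid, just organized differently.
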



\section{Cubical and Koszul Feynman Categories.}\label{FCSec}  In this section we recall the preliminaries needed to state our main theorem.  We refer to \cite{KW} for full details.  A {\it Feynman category} $(\mathsf{V},\mathsf{F})$ is a pair of a symmetric monoidal category $(\mathsf{F},\tensor)$ and a (non-monoidal) fully faithful subcategory $\mathsf{V}\subset \mathsf{F}$, subject to several axioms that we now describe informally.  First $\mathsf{V}$ is a groupoid.  Second, every object in $\mathsf{F}$ is isomorphic to $v_1\tensor\dots\tensor v_n$ for $v_i \in \text{Ob}(\mathsf{V})$.  Third, every morphism in $\mathsf{F}$ is isomorphic to some $ \tensor_{j=1}^m \phi_j$ where each $\phi_j$ has target in $\text{Ob}(\mathsf{V})$.  See \cite[Definition 1.1.1]{KW} for the full definition.  We often refer to such a pair $(\mathsf{V},\mathsf{F})$ simply as $\mathsf{F}$.

In a Feynman category, morphisms of the form $v_1\tensor \dots \tensor v_n \to v_0$ play an essential role.  In \cite{KW} we called these generating morphisms, here we call them {\it basic morphisms} to avoid possible conflict in terminology.  We use the following notation for the set of basic morphisms.  If $\vec{v}:= (v_1\cdc v_n; v_0)$ is a list of $n+1$ objects of $\mathsf{V}$ we define
$
\mathsf{F}(\vec{v}) : = Hom_\mathsf{F}(v_1\tensor\dots\tensor v_n, v_0).
$


To a Feynman category $(\mathsf{V},\mathsf{F})$ we associate the freely enriched categories $\mathbb{V}\subset\mathbb{F}$ with the same objects by taking the span of each set of morphisms.  
We similarly denote the span of $\mathsf{F}(\vec{v})$ by $\mathbb{F}(\vec{v})$.

\subsection{Cubical Feynman Categories}\label{CFC}
One application of Feynman categories is to encode generalizations of operads.  There is a Feynman category $\mathsf{O}$ whose category of representations $\mathsf{O}\text{-ops}_{\op{C}}:=Fun_\tensor(\mathsf{O},\op{C})$ are operads in $\op{C}$.  There are Feynman categories which similarly encode cyclic operads, modular operads, dioperads, wheeled prop(erad)s among others.  In these examples the morphisms of the respective $\mathsf{F}$ may be regarded as graphs.  
 In such examples, the morphisms of $\mathsf{F}$ have extra structure; a gradation corresponding to the number of edges of the graph.  This motivated the definition of a cubical Feynman category \cite[Definition 7.2.2]{KW}.

A cubical Feynman category is a Feynman category $(\mathsf{V}, \mathsf{F})$ along with two additional structures.  The first is a function $\text{deg}:\text{Mor}(\mathsf{F})\to \mathbb{Z}_{\geq 0}$, which satisfies the following properties:
	\begin{enumerate}
		\item $deg(f\tensor g) = deg(f)+deg(g)$,
		\item $deg(f\circ g) = deg(f)+deg(g)$,
		\item $deg(f)=0$ if and only if $f$ is an isomorphism, 
		\item every basic morphism is a composition of morphisms of degree $0$ and $1$.
	\end{enumerate}
We call such a function a proper degree function.

To define the second structure, suppose we are given a Feynman category with a proper degree function.  Write $C_n(X,Y)$ for the set of chains of $n$ or more composible morphisms in $\mathsf{F}$ which start at $X$ and end at $Y$, such that $n$ morphisms in the chain have non-zero degree, modulo the equivalence relation induced by composing degree $0$ morphisms.  Define $C_n^+(X,Y)\subset C_n(X,Y)$ to be the subset of chains whose constituent morphisms have degree $0$ or $1$.

\begin{definition}\label{cfcdef}  A {\it cubical Feynman category} is a Feynman category along with a proper degree function and a free $S_n$ action on the set $C_n(X,Y)$, compatible with composition of sequences, and such that composition of morphisms defines an isomorphism of sets $C_n^+(X,Y)_{S_n}\stackrel{\cong}\to Hom_\mathsf{F}(X,Y)_n$, where this last subscript denotes the degree of the morphism.
\end{definition}

We emphasize that by {\it composition of sequences} in this definition we mean simply the concatenation of a sequence ending at a given object with a sequence starting at that object.  The compatibility assumption specifically means that the composition of sequences $C_n(X,Y)\times C_m(Y,Z) \to C_{n+m}(X,Z)$ is equivariant with respect to the action of the group $S_n\times S_m$, where the target inherits such an action from the natural injection $S_n\times S_m \hookrightarrow S_{n+m}$.
	
\begin{remark}\label{KDrmk}  As we shall see below, knowing that a Feynman category is cubical endows the category $\mathbb{F}\text{-ops}_{dgVect}$ with the fundamental structures of Koszul duality.  This includes the bar/cobar construction \cite[Definition 7.4.1]{KW}
 as well as the notion of quadratic objects and quadratic duality \cite[Appendix A.3]{W6}.	These constructions are simplified by the assumption that each $Hom_\mathsf{F}(X,Y)$ is a finite set.  We impose this restriction from now on and refer to \cite[Definition 7.3.1]{KW} and \cite[Section 2.4]{WardMP} for ways to weaken this hypothesis.
\end{remark}

\subsection{Quadratic duality.}
A $\mathsf{V}$-colored tree will refer to a rooted tree whose flags are colored by objects of $\mathsf{V}$, such that two flags which form an edge have the same color.  We furthermore assume that the leaves of such a tree are numbered $1$ to $n$.  If $\mathsf{t}$ is a $\mathsf{V}$-colored tree we write $V(\mathsf{t})$ and $E(\mathsf{t})$ for its set of vertices and edges.  Given objects $v_i$ of $\mathsf{V}$, we say a $\mathsf{V}$-colored tree with $n$ leaves is of type $(v_1\cdc v_n; v_0)$ provided the $i^{th}$ leaf is colored by $v_i$ and the root is colored by $v_0$. 


Given a groupoid $\mathsf{V}$ we define a new Feynman category $(\mathsf{Cor_V},\mathsf{T_V})$ as follows.  The objects of $\mathsf{Cor_V}$ are  $\mathsf{V}$-colored corollas with numbered leaves (equivalently lists $(v_1\cdc v_n;v_0)$ of objects in $\mathsf{V}$).  The morphisms of $\mathsf{Cor_V}$ are products of morphisms in $\mathsf{V}$, along with symmetric group actions permuting the labels of the leaves.  The objects of $\mathsf{T_V}$ are lists of objects in $\mathsf{Cor_V}$ (as is the case for any Feynman category).  Finally the basic morphisms of $\mathsf{T_V}$ are $\mathsf{V}$-colored trees, whose edges (both internal and external) are labeled by automorphisms of the color of the edge label.  Composition of morphisms is given by insertion of a tree at a compatibly colored vertex, composing automorphisms of the edge labels.

\begin{remark}
\label{rmk:plus}
The  groupoid of $\mathsf{V}$-colored corollas and Feynman categories built on it appear in the plus construction, see \cite[Appendix B]{feynmanrep} for full details. There, a vertex decoration is added to yield a plus construction $(\mathsf{V}^+,\mathsf{F}^+)$ for a Feynman category $(\mathsf{V},\mathsf{F})$. Forgetting these vertex decorations gives a forgetful morphism of Feynman categories $(\mathsf{V}^+,\mathsf{F}^+)\to (\mathsf{Cor_V},\mathsf{T_V})$.
\end{remark}

The Feynman category $\mathsf{T_V}$ has a proper degree function given by associating to a tree (resp.\ forest) its number of (internal) edges.  With this degree function, a chain of degree $1$ morphisms specifies a forest with $n$ edges along with an order in which they are assembled.  Permuting this order specifies an action of $S_n$ on such chains of length $n$. 

\begin{lemma}\label{TVcubical}  With these structures, the Feynman category $\mathsf{T_V}$ is cubical.
\end{lemma}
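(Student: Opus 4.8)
The statement asserts that $\mathsf{T_V}$, equipped with the edge-counting degree function and the $S_n$-action on chains of degree-$1$ morphisms described just above, satisfies the definition of a cubical Feynman category. The plan is to verify in turn the three ingredients: (i) that $\mathsf{T_V}$ is a Feynman category, (ii) that edge-counting is a proper degree function, and (iii) that the $S_n$-action on $C_n(X,Y)$ is free and that composition induces the required bijection $C_n^+(X,Y)_{S_n}\xrightarrow{\cong} Hom_\mathsf{F}(X,Y)_n$. Part (i) is not proved here but is recalled from \cite{feynmanrep} / \cite{KW} via the description of $\mathsf{T_V}$ above, so I would cite it and move on.

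For (ii), I would check the four axioms of a proper degree function directly from the description of morphisms in $\mathsf{T_V}$ as $\mathsf{V}$-colored trees (and forests) with edge labels. Axiom (1), $\deg(f\tensor g)=\deg(f)+\deg(g)$, is immediate since the tensor product of two forests is their disjoint union and edges add. Axiom (2), $\deg(f\circ g)=\deg(f)+\deg(g)$, follows because composition is grafting/insertion of a tree at a vertex: the edges of the composite are exactly the edges of the two constituents together (no edges are created or destroyed by insertion — the flags being joined were already edges in $g$'s target corolla data, or rather the internal edges are simply pooled). Axiom (3): a morphism has no edges iff the underlying tree is a union of corollas, i.e.\ the morphism is a product of morphisms in $\mathsf{Cor_V}$, which are precisely the isomorphisms of $\mathsf{F}$. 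Axiom (4): every basic morphism is a $\mathsf{V}$-colored tree $\mathsf{t}$; contracting its edges one at a time exhibits it as a composition of degree-$1$ morphisms (each contracting a single edge) post/pre-composed with the relabeling isomorphisms from the edge automorphisms, all of degree $0$ or $1$.

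For (iii), I would unwind the construction of $C_n(X,Y)$: a chain of $n$ composable morphisms of nonzero degree, modulo inserting/absorbing degree-$0$ morphisms, with each morphism restricted to degree $0$ or $1$ for $C_n^+$. By axiom (4) and the grafting description, such a chain records a forest with $n$ edges together with a total order on those edges (the order in which they are contracted, reading the chain from $X$ to $Y$), plus edge-automorphism data that can be pushed into the endpoints. The $S_n$-action permutes this total order. Freeness is then the assertion that two distinct orderings of the $n$ edges give genuinely distinct chains up to the degree-$0$ equivalence — which holds because the underlying forest-with-ordered-edges is recovered from the chain, and $S_n$ acts freely on total orders of an $n$-element set. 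The bijection $C_n^+(X,Y)_{S_n}\xrightarrow{\cong}Hom_\mathsf{F}(X,Y)_n$ says: a morphism $X\to Y$ of degree $n$ (a forest with $n$ edges) is the same as an ordered-edge forest modulo forgetting the order, i.e.\ composition sends an ordered chain of single-edge contractions to the forest obtained by performing all the contractions. Surjectivity is axiom (4) applied to each tree-component; injectivity is the statement that any two contraction orders of the same forest yield composites that agree, which is exactly that grafting is associative/coherent in $\mathsf{F}$.

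The main obstacle I expect is not any single axiom but the bookkeeping of edge-automorphism labels when passing between the ``tree with labeled edges'' picture and the ``chain of contractions'' picture: one must check that the equivalence relation defining $C_n(X,Y)$ (absorbing degree-$0$ morphisms) exactly accounts for the freedom to slide these automorphisms along, so that $C_n^+(X,Y)$ is genuinely ``ordered-edge forests'' with no residual redundancy, and hence the quotient by $S_n$ is clean. Once this identification $C_n^+(X,Y)\cong\{\text{forests with }n\text{ edges and a total order on edges}\}$ is pinned down, freeness of the action and the bijection with $Hom_\mathsf{F}(X,Y)_n$ are both immediate from the corresponding elementary facts about finite sets and about grafting of trees.
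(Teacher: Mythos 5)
Your verification is correct and supplies the details that the paper, having just described the edge-counting degree and the $S_n$-action on chains, declares ``immediate.'' The key observation is your identification of $C_n^+(X,Y)$ (after collapsing degree-$0$ morphisms) with forests equipped with a total order on their $n$ internal edges: freeness of the $S_n$-action then reduces to freeness of $S_n$ on total orders of an $n$-element set, and the quotient $C_n^+(X,Y)_{S_n}$ is precisely the set of forests with $n$ edges, i.e.\ $Hom_{\mathsf{F}}(X,Y)_n$. This is the same (one-line) argument the paper has in mind.

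One labeling slip at the end of your part (iii): what you call injectivity --- ``any two contraction orders of the same forest yield composites that agree'' --- is really well-definedness of the composition map on the $S_n$-quotient (equivalently, the required compatibility of the $S_n$-action with composition). Injectivity of $C_n^+(X,Y)_{S_n}\to Hom_{\mathsf{F}}(X,Y)_n$ is the converse: if two chains in $C_n^+$ compose to the same degree-$n$ morphism $\phi$, then (by your identification) each is determined by a total order on the $n$ edges of the forest $\phi$, hence they lie in a single $S_n$-orbit. You already have the needed identification stated, so this is a mislabeling rather than a gap.
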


\begin{proof} 

Let us first verify that the stated degree function satisfies the conditions (1)-(4) enumerated above.  Since tree insertion and juxtaposition are both additive operations with respect to the number of edges, the first two conditions are satisfied.  To verify the condition (3), observe that if a morphism is of degree $0$ then it has no edges and hence has an inverse given by the same corolla with inverse automorphism labels.  Conversely if a morphism has non-zero degree it can't be an isomorphism due to condition (2).  Whence condition (3).  Finally, to verify condition (4) note that the condition is vacuous for a basic morphism of degree $0$ or $1$.  We may then proceed by induction, as collapsing an edge of a degree $n$ morphism allows us to write such as a composition of a degree $n-1$ morphism and a degree $1$ morphism.  Here, the automorphism labeling the edge plays no role in the collapse, it merely decorates the edge of the degree $1$ morphism in the decomposition.

Let us now proceed to consideration of the $S_n$ action in more detail. 
Recall that in an operad, given a $\circ_i$ operation and a permutation $\sigma$ such that the composition $\circ_i \cdot \sigma$ is well defined, there is a unique permutation $\sigma^\prime$ and a unique integer $i^\prime$ such that $\sigma^\prime\cdot\circ_{i^\prime} =\circ_i\cdot \sigma$.  In the parlance of \cite{KW}, trees are of ``crossed type'' \cite[Lemma 5.2.1]{KW}.

We may translate this fundamental recollection into our current language as follows.  Write $\mathsf{T}$ in place of $\mathsf{T_V}$ in the case that $\mathsf{V}$ is a groupoid with one object and one morphism.  Then crossed type implies that every degree $1$ morphism in $\mathsf{T}$ can be written uniquely as $\tau\cdot \circ_i$, and thus every chain of $n$ degree $1$ morphisms in $\mathsf{T}$ is equivalent to a unique chain of the form:
$\tau \cdot \circ_{{i_n}}\cdot . . . \cdot \circ_{{i_1}}.$

Such a chain specifies an edge ordered forest and each edge ordered forest is specified by a unique chain of this form.  As such, two chains corresponding to the same edge-ordered forest must be equivalent, and so the $S_n$ action is well defined. To complete the proof in this case, observe that two edge orderings on a forest are related by a permutation of the edges (by definition), and since the $S_n$ action simply permutes the degree $1$ morphisms corresponding to the edges, the action will be transitive.

The case of general $\mathsf{V}$ follows by exactly the same logic, as soon as one observes that the labeled trees which constitute morphisms in $\mathsf{T_V}$ are also of crossed type.  In other words, every degree $1$ morphism can be written in the form $\tau \cdot \circ_{i,\alpha}$ for an isomorphism $\tau$ and a degree $1$ morphism $\circ_{i,\alpha}$, which grafts at the leaf labeled by $i$ of a given vertex while labeling the newly formed edge of color $v$ with the automorphism $\alpha \in Aut(v)$.  In particular, a composition of the form $\circ_{j,\beta}\cdot \sigma $ can be written uniquely in the form $\tau \cdot \circ_{i,\alpha}$ by composing the automorphisms of $\mathsf{V}$ corresponding to the grafted edge to determine $\alpha$ and using the remaining automorphisms of $\mathsf{V}$ to form $\tau$. \end{proof}

Any Feynman category $\mathsf{F}$ with vertices $\mathsf{V}$ may be viewed as a symmetric monoidal functor (called a  $\mathsf{T_V}$-op in \cite{KW}) from the category $\mathsf{T_V}$ to sets via $\vec{v} \mapsto \mathsf{F}(\vec{v})$.  Similarly, its linearization $\mathbb{F}$ may be viewed as a $\mathsf{T_V}$-op valued in Vect.  Informally we may say that $\mathsf{T_V}$ is the Feynman category which encodes the Feynman categories with fixed vertex groupoid $\mathsf{V}$.


The forgetful functor from $\mathsf{T_V}$-ops to $\mathsf{Cor_V}$-modules has a left adjoint \cite[Theorem 1.5.3]{KW}, which we denote by $Fr(-)$.  For each object $\vec{v}$, $Fr(A)(\vec{v})$ is a direct sum over $\mathsf{V}$-colored trees of type $\vec{v}$ whose vertices are labeled by $A$, modulo the action of the automorphism group at each internal edge.  In particular, $Fr(A)(\vec{v})$ has an additional grading given by the number of vertices of the tree which we call the {\it weight}.  Adapting  \cite[Appendix A.3]{W6} to this particular example, we say that a Feynman category is {\it quadratic} if there exists a $\mathsf{Cor_V}$-module $A$ and an object-wise surjective morphism $Fr(A)(\vec{v}) \stackrel{\pi_{\vec{v}}}\to \mathbb{F}(\vec{v})$ whose kernel is generated in weight 2.

\begin{lemma}  If $\mathsf{F}$ is a cubical Feynman category, then its linearization $\mathbb{F}$ is quadratic.  
\end{lemma}

\begin{proof}

	Let $A$ be the collection of morphisms of $\mathbb{F}$ of degree $\leq 1$, viewed as a $\mathsf{Cor_V}$-module in dgVect.  
	There is a morphism of dg $\mathsf{T_V}$-ops $Fr(A) \to \mathbb{F}$ given by composing morphisms along the tree as a flow chart.  Since every morphism in $\mathbb{F}$ can be written as the span of a composition of degree $1$ morphisms, this map is surjective for each $\vec{v} \in \text{Ob}(\mathsf{Cor_V})$.
	

Fix such a $\vec{v}$ and let $\alpha\in \text{ker}(\pi_{\vec{v}})$.  The graded vector space $\mathbb{F}(\vec{v})$ splits over degree, and in each degree $n$ splits over the underlying set of morphisms $\mathsf{F}(\vec{v})$ of degree $n$.  For such a morphism $\phi \in \mathsf{F}(\vec{v})_n$, the cubical condition tells us
	$\pi^{-1}(\phi) = \text{span}\{\sigma\gamma \ | \ \sigma \in S_n \text{ and } \gamma \in C_n^+(v_1\tensor\dots\tensor v_n; v_0) \}.$
	 Therefore $\alpha$ is a sum of vectors of the form $\alpha_\phi:=\sum_{\sigma \in S_n} c_\sigma\sigma\gamma$ where $\sum c_\sigma = 0$, and it suffices to show that each such $\alpha_\phi$ is generated in weight 2.
	 
	 Fixing $\phi$, assume that $\alpha_\phi\neq 0$. 
	 Then $\sum c_\sigma = 0$ implies
	 $$\alpha_\phi
	 =\sum_{\sigma \in S_n\setminus \{id\}} c_\sigma(\sigma\gamma-\gamma),$$
	  so it is in turn sufficient to show that each $\sigma\gamma - \gamma$ is generated in weight 2.  Fix such a $\sigma$ and write it as a product of transpositions $\sigma = \tau_r\cdot ... \cdot \tau_1$.   Then $\gamma - \sigma \gamma = 
	\sum_{j=0}^r (1-\tau_j)(\tau_{j-1} \dots \tau_1 \gamma)$, interpreting $\tau_0$ as $0$, and hence it is sufficient to show that that $\tau\gamma - \gamma$ is generated in weight $2$ for a transposition $\tau$.  However this is immediate, as a transposition switches the order of two edges in the tree underlying the chain of morphisms $\gamma$.  Indeed we may assume the transposition is of the form $\tau = (s \ s+1)$ and use the fact (Definition $\ref{cfcdef}$) that composition of chains of morphisms is compatible with the action of the symmetric group to conclude that $\tau(\gamma_{> s+1}\circ\gamma_{s+1}\circ\gamma_s\circ \gamma_{ < s })$
is of the form 
 $\gamma_{> s+1}\circ\tilde\gamma_{s}\circ\tilde\gamma_{s+1}\circ \gamma_{ < s}$.	Hence $\mathbb{F}$ is quadratic, with quadratic presentation $Fr(A)/\langle \tau\gamma-\gamma \rangle$, where the relations are generated over all chains of morphisms $\gamma$ and transpositions $\tau$, or equivalently over just degree $2$ chains $\gamma$ and the transposition $\tau =(12)$.
\end{proof}

Given such an $\mathbb{F}$, its quadratic dual $\mathbb{F}^!$ is by definition the quadratic $\mathsf{T}_{\mathsf{V}}$-op whose generators $\Sigma^{-1}A^\ast$ are the object-wise linear duals of $A$, with a shift in degree, and whose relations are generated by those functionals vanishing on each $\tau\gamma-\gamma$.  In this case, since each $A(\vec{v})$ has a given basis (namely the degree $1$ elements of the finite set $\mathsf{F}(\vec{v})$), it may be canonically identified with its linear dual.  Under these identifications we have $\mathbb{F}^!\cong Fr(\Sigma^{-1}A)/\langle \Sigma^{-2}(\tau\gamma+\gamma) \rangle $.

\begin{remark}  The presentation of the quadratic dual $\mathbb{F}^!$ above shows that it coincides with the $\mathfrak{K}$-twist of $\mathbb{F}$, as defined in \cite[Definition 5.2.4]{KW}.
\end{remark}


\subsection{(Co)bar construction of a cubical Feynman category and Koszulity.}

Although it is possible to extract the definition of the (co)bar construction of a cubical Feynman category by combining \cite[Definition 7.4.1]{KW} with Lemma $\ref{TVcubical}$ above, we will take some space here to give a more careful unpacking of this definition.  Specifically, if $(\mathsf{V},\mathsf{F})$ is a cubical Feynman category and $\mathbb{F}$ its linearization, let us unwrap the definition of the cobar construction of the linear dual of $\mathbb{F}$, denoted $\Omega(\mathbb{F}^\ast)$.

Let $\mathsf{t}$ be a $\mathsf{V}$-colored tree. If $w$ is a vertex of $\mathsf{t}$ with output color $v_0$, we may enumerate the colors labeling its inputs $v_1\cdc v_n$ and define
\begin{equation*}  \mathbb{F}(w):=[\bigoplus_{\sigma \in S_n} Hom_\mathbb{F}(v_{\sigma(1)}\tensor \dots \tensor v_{\sigma(n)}; v_0)]_{S_n}.
\end{equation*} 

If $e$ is an edge of $\mathsf{t}$ of color $v_e\in \text{ob}(\mathsf{V})$, then the group $Hom_\mathsf{V}(v_e,v_e)$ acts on $\tensor_{w\in V(\mathsf{t})}\mathbb{F}(\mathsf{t})$ by simultaneously composing with the automorphism on the input factor and its inverse on the output factor. Let $E_\mathsf{t}$ be the group $\times_{e\in E(\mathsf{t})} Hom_\mathsf{V}(v_e,v_e)$.  Define
\begin{equation}\label{coinvariants}
\mathbb{F}(\mathsf{t}) := [\bigotimes_{w \in V(\mathsf{t})} \mathbb{F}(w)]_{E_{\mathsf{t}}}
\end{equation}
Observe that the group $E_{\mathsf{t}}$ is finite, and the vector space $\mathbb{F}(w)$ is finite dimensional (Remark $\ref{KDrmk}$).  We then define $(\Omega(\mathbb{F}^\ast),\partial) \in \mathsf{T_V}\text{-ops}_{\text{dgVect}}$ by the formula
\begin{equation}\label{cobar}
\Omega(\mathbb{F}^\ast)(\vec{v}) = \bigoplus_{\mathsf{t} \text{ of type }\vec{v}} \Sigma^{-1}Det^{-1}(E(\mathsf{t}))\tensor \mathbb{F}(\mathsf{t})^\ast \cong \bigoplus_{\mathsf{t} \text{ of type }\vec{v}} Det^{-1}(V(\mathsf{t}))\tensor \mathbb{F}(\mathsf{t})^\ast,
\end{equation}
along with the $\mathsf{T_V}$-op structure given by grafting trees and a differential $\partial$ which expands tree edges in all possible ways.  Here $Det^{-1}$ takes the top exterior power of a set, but placed in negative degree, and $\Sigma^{-1}$ indicates a downward shift in degree.  
The isomorphism in equation $\ref{cobar}$ is induced by the isomorphism $\Sigma^{-1}Det^{-1}(E(\mathsf{t}))\cong Det^{-1}(V(\mathsf{t}))$ given by identifying an internal edge of a tree with the vertex immediately above it and placing the root vertex, by convention, in the last position.


Finally we observe that there is a natural map of $\mathsf{T_V}$-ops
\begin{equation}\label{koszulmap}
\Omega(\mathbb{F}^\ast) \to \mathbb{F}^!.
\end{equation}
 This map has the general description that a tree labeled by degree 1 morphisms is sent to its composite, else it's sent to zero.  The shift $\Sigma^{-1}$ at each vertex means the input comes with a mod 2 order on the set of vertices, hence basic morphisms.  By construction, the sum of expansions will land in a relation of the quadratic dual hence ensuring the result is a dg map.

The following definition generalizes the notion of a Koszul operad \cite{GK}.
\begin{definition}\label{Koszuldef}  A quadratic Feynman category is Koszul if the map in Equation $\ref{koszulmap}$ induces a homology isomorphism $H_\ast(\Omega(\mathbb{F}^\ast))(\vec{v})\cong \mathbb{F}^!(\vec{v})$ for each $\vec{v}$ in Ob($\mathsf{Cor_V}$).
\end{definition}



\section{Cubical implies Koszul.}

This section is devoted to the proof of our main theorem:

\begin{theorem}\label{main} Let $(\mathsf{V},\mathsf{F})$ be a cubical Feynman category and let $\mathbb{F}$ be its linearization.  Then $\mathbb{F}$ is Koszul.
\end{theorem}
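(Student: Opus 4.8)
The plan is to reduce the Koszulity statement to an object-wise statement about the acyclicity of a certain fiber, and then to prove that acyclicity by a width-filtration spectral sequence argument generalizing the permutohedron computation of Lemma \ref{Plem}. First I would fix an object $\vec{v}=(v_1\cdc v_n;v_0)$ of $\mathsf{Cor_V}$ and unpack what the Koszul map \eqref{koszulmap} does at $\vec{v}$. Because $\mathbb{F}(\vec{v})$ splits as a direct sum over degrees $n$, and in each degree over the underlying set $\mathsf{F}(\vec{v})_n$ of basic morphisms $\phi$ of that degree, it suffices (by naturality and the behavior of the differential $\partial$, which only expands edges and hence preserves the composite morphism) to show that for each fixed $\phi$ the corresponding summand of $\Omega(\mathbb{F}^\ast)(\vec{v})$ — the subcomplex of trees whose degree-$1$ labeling composes to $\phi$, together with the edge-determinant twists — has the homology of the target, namely a single copy of $\mathbb{F}_\mathfrak{K}$ concentrated in the appropriate degree. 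Equivalently, the reduced fiber over $\phi$ must be acyclic.

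The key observation is that this fiber is precisely (a quotient of, by the free $S_n$-action and the edge automorphism groups $E_\mathsf{t}$) the permutohedral complex $C_\ast(P_n)$ from Section \ref{permsec}: a maximal chain $\gamma\in C_n^+(v_1\tensor\dots\tensor v_n;v_0)$ resolving $\phi$ is an ordered assembly of $n$ degree-$1$ steps, i.e.\ (the data underlying) a vertex of the permutohedron, and a general tree in the fiber, labeled so as to compose to $\phi$, corresponds to a TO partition — a cell of $P_n$ — via the width/branching dictionary. The cubical axiom, specifically the isomorphism $C_n^+(X,Y)_{S_n}\xrightarrow{\cong} Hom_\mathsf{F}(X,Y)_n$ together with the compatibility of composition with the $S_n$-action, is exactly what guarantees that passing to coinvariants does not change the homology: the relevant quotient is by a free action (tensored with the determinant sign representations coming from the $\Sigma^{-1}$ shifts and $Det^{-1}(E(\mathsf{t}))$), so the fiber's homology is the coinvariants of $H_\ast(C_\ast(P_n))$, which by Corollary \ref{PtoS} is the homology of a point. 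Thus I would set up exactly the width filtration used in Section \ref{permsec} but now on the fiber complex: filter by the cardinality of the least block (equivalently the initial branching number of the tree), run the associated spectral sequence, and prove the analogue of Lemma \ref{Plem} — the $E^0$ page splits as a sum of lower permutohedral fibers, each row is acyclic off-diagonal by induction on $n$, the $E^1$ page is the simplicial chain complex of $\Delta^{n-1}$ (twisted by the appropriate signs), and $E^2$ collapses to a single class. This is the content of the promised Lemma \ref{widthfiltration}; its proof is the combinatorial heart.

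The main obstacle I expect is bookkeeping of signs and of the quotients by $E_\mathsf{t}$ and the symmetric group coinvariants, making sure the identification of the fiber with a (twisted) permutohedral complex is compatible with differentials and that the induction on $n$ in the width spectral sequence is clean — in particular that the edge-color automorphism groups act compatibly with block subdivision so that taking $E_\mathsf{t}$-coinvariants commutes with the filtration. A secondary subtlety is checking that the Koszul map, after this identification, really is the surjection $C_\ast(P_n)\twoheadrightarrow C_\ast(\Delta^{n-1})$ of Corollary \ref{PtoS} on each fiber (sending a tree to its composite precisely when all its pieces have degree $1$, which corresponds to the cell being a vertex of $P_n$ in the relevant stratum), so that the quasi-isomorphism there yields the weak equivalence required by Definition \ref{Koszuldef}. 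Once the fiberwise acyclicity is in hand, the passage to the object-wise weak equivalence, and hence Koszulity in the sense of Definition \ref{Koszuldef}, is formal via \cite[Theorem 8.2.1]{KW}.
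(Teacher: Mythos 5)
Your overall strategy coincides with the paper's: reduce Koszulity to the object-wise quasi-isomorphism of Definition~\ref{Koszuldef}, split $\Omega(\mathbb{F}^\ast)(\vec v)$ over the set $\mathsf{F}(\vec v)$ into fibers $C_\ast(\phi)$, and prove acyclicity of each fiber via a width-filtration spectral sequence modeled on the permutohedron argument of Section~\ref{permsec}, landing on the simplicial chain complex of $\Delta^{n-1}$ at the $E^1$ page. That is indeed the content of Lemma~\ref{widthfiltration} and the conclusion of Theorem~\ref{main}.

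However, there is a genuine misstep in your proposed shortcut. You assert that $C_\ast(\phi)$ is ``a quotient of, by the free $S_n$-action and the edge automorphism groups $E_\mathsf{t}$, the permutohedral complex $C_\ast(P_n)$'', and hence that its homology is the coinvariants of $H_\ast(C_\ast(P_n))$ and is a point by Corollary~\ref{PtoS}. This is false in general and, if it were true, would render the spectral sequence argument superfluous. The free $S_n$-action supplied by the cubical axiom lives only on the set $C_n(X,Y)$ of \emph{maximal} chains --- equivalently on the top cells of $C_\ast(\phi)$. On lower-dimensional cells the relation between $C_\ast(P_n)$ and $C_\ast(\phi)$ is a blow-down, not a quotient by a group action: as the later section on permutohedra-to-simplices makes explicit, the map $C_\ast(P_n)\twoheadrightarrow \Sigma^n C_\ast(\phi)$ sends many distinct TO partitions to the same labeled tree, and also sends some TO partitions to zero (those strictly coarser than the induced vertex-partition). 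For example, in the cyclic-operad case of Figure~\ref{fiber}, $C_\ast(\phi)$ is a graph associahedron obtained from $P_4$ by collapsing facets, and this collapse is not a coinvariant construction in any useful sense. So the coinvariant reasoning does not establish acyclicity and must be dropped entirely; the width filtration and its spectral sequence are not a confirmation of that reasoning, they are the actual argument.

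A second, smaller imprecision: the filtration parameter in $C_\ast(\phi)$ is the \emph{degree} of the morphism labeling the root vertex, not the root's branching number. These coincide in the permutohedron model (where every basic morphism of degree $p$ has $p$ inputs) but not for a general cubical Feynman category, where a root labeled by a degree-$p$ basic morphism may have fewer than $p$ inputs. Relatedly, your description of the $E^0$ page as ``a sum of lower permutohedral fibers'' undersells a real feature of the general case: after freezing a root of degree $p$ indexed by $X\subset\oneton$, the residual morphism $\phi_Y$ is a forest, possibly with several components, and the summand $C_\ast(\phi,X)$ is isomorphic to a shift of a \emph{tensor product} $\bigotimes_i C_\ast(\phi^i_{\sigma,Y})$ of fibers over the components. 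The induction on total degree then needs a K\"unneth argument to conclude that each such tensor product is concentrated in a single degree. In the pure permutohedron case of Lemma~\ref{Plem} the residual is always a single block $Y$ and the tensor product is trivial, which may be why this point slipped past.

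With the coinvariant claim removed and the two adjustments above made (filtration by degree of the root label, and tensor-product structure on $E^0$), your outline matches the paper's proof.
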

After Definition $\ref{Koszuldef}$, it suffices to prove that for each list $\vec{v}=(v_1\cdc v_n; v_0),$ the map $\Omega(\mathbb{F}^\ast)(\vec{v}) \stackrel{\sim}\to \mathbb{F}^!(\vec{v})$ is a quasi-isomorphism.  For any $\mathsf{V}$-colored tree, the vector space $\mathbb{F}(\mathsf{t})$ has a basis given by those trees whose vertices are labeled by morphisms in the set valued Feynman category $\mathsf{F}$.  Call the set of such vectors $\mathsf{F}(\mathsf{t})\subset \mathbb{F}(\mathsf{t})$ and call an element in this set homogeneous.  We emphasize that a homogeneous vector is an equivalence class of labeled trees under the action of the group $E_\mathsf{t}$.
 
Using this basis we identify  
\begin{eqnarray*}
	\Omega(\mathbb{F}^\ast)(\vec{v}) \cong  \bigoplus_{\mathsf{t} \text{ of type }\vec{v} } Det^{-1}(V(\mathsf{t}))\tensor\mathbb{F}(\mathsf{t}) 
\end{eqnarray*}%
An element in $Det^{-1}(V(\mathsf{t}))\tensor\mathbb{F}(\mathsf{t})$ can be written uniquely as a sum of pure tensors whose right hand factor is homogeneous.  For each such pure tensor, composition of morphisms in $\mathsf{F}$ gives us a map of sets $\mathsf{F}(\mathsf{t})\to \mathsf{F}(\vec{v})$.  This in turn gives us a splitting indexed over the set $\mathsf{F}(\vec{v})$:
\begin{equation*}
\Omega(\mathbb{F}^\ast)(\vec{v}) \cong \bigoplus_{\phi \in \mathsf{F}(\vec{v})} C_\ast(\phi),
\end{equation*}
by defining $C_\ast(\phi)$ to be the span of such vectors which compose to $\phi\in \mathsf{F}(\vec{v})$.   Since the cobar differential is a sum over ways to decompose a morphism, which doesn't alter the composition, this is indeed a splitting. 

\subsection{Description of the complex $C_\ast(\phi)$}  Fix a (basic) morphism $\phi \in \mathsf{F}(\vec{v})_n$.  The complex $(C_\ast(\phi),d)$ is the span of homogeneous vectors represented by trees of type $\vec{v}$  whose vertices are labeled by basic morphisms of $\mathsf{F}$ of non-zero degree, along with a choice of mod 2 order on the set of tree edges (or equivalently the tree vertices), and with the property that reading the tree as a flow chart (disregarding this edge order) gives the morphism $\phi$.  Note that the same tree with the opposite mod 2 order on the set of edges is identified with the negative of the original.  The differential is the sum over edge expansions, with convention that an expanded edge is placed in the last position with respect to the mod 2 edge order.

The degree of a (homogeneous) vector in $C_\ast(\phi)$ is given by the negative of the number of tree edges minus 1, or equivalently the negative of the number of tree vertices.  The map $C_\ast(\phi)\subset \Omega(\mathbb{F}^\ast)\to \mathbb{F}^!$ is $0$ away from degree $-n$.  In degree $-n$, the $n$ vertices are necessarily each labeled with a basic morphism of degree $1$, and the mod 2 order on the vertices specifies a mod 2 order on the these degree 1 generating morphisms, and hence specifies an element of $\mathbb{F}^!$, to which it is sent under the map in Equation $\ref{koszulmap}$.

\subsection{Analysis of the width filtration}

Define the {\it width} of a homogeneous element in $C_\ast(\phi)$ to be the degree of the morphism which labels the root vertex.
The differential of $C_\ast(\phi)$ can not increase the width, so there is a filtration of the form $C_\ast(\phi)^1 \subset C_\ast(\phi)^2 \subset ...\subset C_\ast(\phi)^n = C_\ast(\phi)$, where $C_\ast(\phi)^t$ denotes the subcomplex spanned by cells of width $\leq t$.  The following Lemma generalizes Lemma $\ref{Plem}$ above.


\begin{lemma}\label{widthfiltration} We continue to assume $\phi$ is a basic morphism of $\mathsf{F}$ of degree $n$.  The spectral sequence $E=E(\phi)$ associated to the filtered chain complex $C_\ast(\phi)$ has the following description:
	\begin{enumerate}
		\item There is a splitting of chain complexes:
		$E^0_{p,\ast} \cong \displaystyle\bigoplus_{\substack{X\subset \{1\cdc n\} \\ |X| = p\geq 1}} C_\ast(\phi,X), $
		\medskip	
		where $C_\ast(\phi,X)$ has homology
$		H_q(\phi,X) \cong \begin{cases}
		k & \text{ if } q= -n-1  \\
		0 & \text{ else }.
		\end{cases}$
		\medskip

	\item In particular $E^1_{p,q}$ has rank ${n \choose p}$ if $q=-n-1$ and is $0$ otherwise.

		\item The complex $(E^1_{p,-n-1}, \partial_1)$ is isomorphic to a simplicial chain complex of an $n-1$ simplex, shifted down by degree $1$.
	
		\item In particular $E^2_{p,q}$ has rank $1$ if $p=1$ and $q=-n-1$ and is $0$ otherwise.

	\end{enumerate}
\end{lemma}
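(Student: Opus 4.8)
The plan is to run the spectral-sequence argument of Lemma \ref{Plem} verbatim, with trees composing to $\phi$ in place of totally ordered partitions of $\oneton$; since Lemma \ref{Plem} is recovered by specializing $\mathsf{F}$, the combinatorics below must reduce to that of the permutohedron, where the role of ``least block'' is played by the index $X$ and the role of ``smaller permutohedron on the complement'' is played by a tensor product of complexes of lower-degree morphisms. I would induct on $n=\deg(\phi)$; the base case $n=1$ is immediate, as then $C_\ast(\phi)$ is one-dimensional, spanned by the tree with a single vertex labeled $\phi$, sitting in degree $-1=-n$, i.e.\ in bidegree $p=1$, $q=-n-1$.

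\textbf{Step 1: the differential $d^0$.} First I would identify the associated graded differential. Expanding an edge at a non-root vertex leaves the root morphism, hence the width, unchanged; expanding an edge at the root vertex replaces its degree-$p$ label by a composition of two morphisms of strictly smaller positive degree (possible only for $p\geq 2$), hence strictly lowers the width and lies in a lower filtration step. Thus $d^0$ is exactly the sum over expansions of non-root edges. In particular $d^0$ fixes the root morphism \emph{and} the composite of each subtree hanging off an input of the root.

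\textbf{Step 2: the splitting and inductive acyclicity — the crux.} Using the cubical structure, fix once and for all a representative chain of $n$ degree-one morphisms composing to $\phi$; this labels the ``$n$ units of degree'' of $\phi$ by $\oneton$. Each homogeneous generator of width $p$ then acquires a well-defined root statistic $X\subset\oneton$ with $|X|=p$, namely the set of degree units flowing through the root vertex, and by Step 1 this statistic is $d^0$-invariant; taking $C_\ast(\phi,X)$ to be the span of generators with root statistic $X$ gives the splitting of $E^0_{p,\ast}$. To compute $H_\ast(\phi,X)$, note that by Step 1 the complex $C_\ast(\phi,X)$ is the cobar-type complex of the forest above a fixed root morphism of degree $p$; grouping by the composites $\phi_1\cdc\phi_k$ of those subtrees (each of degree $<n$) exhibits $C_\ast(\phi,X)$, up to shift, as a tensor product $\bigotimes_i C_\ast(\phi_i)$. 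By the inductive hypothesis each $C_\ast(\phi_i)$ is acyclic away from a single degree with one-dimensional homology there, so over a field the tensor product is too, and the degree count places the surviving class in $q=-n-1$; since $X$ ranges over $p$-subsets of an $n$-set this yields $E^1_{p,-n-1}\cong k^{n\choose p}$. This is the step I expect to be the main obstacle: making the root statistic genuinely well defined for an \emph{arbitrary} cubical Feynman category — where, unlike for the permutohedron, distinct directed edge paths on a cube need not give distinct operations — and checking that the admissible decompositions carrying a prescribed $X$ assemble into a single such tensor product (possibly after interposing a further sub-filtration, as the bookkeeping with the $E_\mathsf{t}$-quotients and $Det$'s suggests). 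The analogous assertion in Lemma \ref{Plem}, ``fix the least block and take the smaller permutohedron on the complement,'' is the template.

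\textbf{Step 3: the $E^1$ differential and conclusion.} By Step 2, $E^1_{p,-n-1}$ has basis the $p$-subsets $X$ of $\oneton$, and $\partial_1$ is induced by the component of $d$ lowering the width by exactly one, i.e.\ by splitting a single degree-one morphism off the root. On root statistics this is $X\mapsto\sum_{x\in X}\pm(X\setminus\{x\})$, and a sign check identical in form to the Koszul-sign bookkeeping of Section \ref{permsec} (the split-off edge inserted in the last position of the mod $2$ order) identifies $(E^1_{\bullet,-n-1},\partial_1)$ with the simplicial chain complex of $\Delta^{n-1}$, shifted down by one. Since a simplex has homology $k$ in degree $0$ and $0$ elsewhere, transporting through this reindexing gives $E^2_{p,q}\cong k$ exactly at $p=1$, $q=-n-1$, and $0$ otherwise, as claimed. (For Theorem \ref{main} one adds only the observation that $E^2$ is concentrated at a single bidegree, so $E^2=E^\infty$ and $H_\ast(C_\ast(\phi))$ is $k$ in degree $-n$ and $0$ otherwise, matching $\mathbb{F}_\mathfrak{K}(\vec v)$ and exhibiting the map of Equation \ref{koszulmap} as a quasi-isomorphism.)
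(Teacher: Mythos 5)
Your approach is essentially the one the paper takes: width filtration, splitting of the $E^0$ page over $p$-subsets $X\subset\oneton$, identification of each summand with a (shifted) tensor product of lower-degree complexes, induction, and then the observation that $\partial_1$ reproduces the simplex differential. Steps 1 and 3 match the paper's argument nearly verbatim, and your degree accounting in Step 2 (the surviving class landing in $q=-n-1$) agrees with the paper's computation $H_\ast(C_\ast(\phi,X)) \cong \Sigma^{-p-1}\bigl(\bigotimes_i \Sigma^{-n_i}k\bigr) \cong \Sigma^{-n-1}k$.

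The one place you stop short is exactly the point you flag as the main obstacle: showing the ``root statistic'' $X$ is well defined, i.e.\ that a fixed decomposition $\phi=\phi_n\circ\dots\circ\phi_1$ really does attach a unique subset $X\subset\oneton$ to each homogeneous generator of width $p$, and that the root label $\phi_X$ and the complementary composite $\phi_Y$ depend only on $X$ and not on the chosen refinement. The paper resolves this without any auxiliary sub-filtration and without any accounting of $Det$'s or $E_{\mathsf t}$-quotients: it introduces the set $S_{Y<X}=\{\sigma\in S_n : \sigma(y)<\sigma(x)\ \forall y\in Y, x\in X\}$, observes that an element of $S_{Y<X}$ is exactly a pair of total orders on $Y$ and on $X$, and then invokes the cubical axiom directly — specifically, the compatibility of the free $S_n$-action on length-$n$ chains with composition (equivalently, the compatibility of the induced $S_{n-p}\times S_p$-action with the $S_n$-action) — to conclude that $\phi_X$ and $\phi_Y$ are independent of $\sigma\in S_{Y<X}$. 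With that in hand, $C_\ast(\phi,X)$ is by definition the span of trees whose root vertex is labeled $\phi_X$ and whose branches compose to the factors $\phi_{\sigma,Y}^i$ of $\phi_Y$, and the desired isomorphism $\bigotimes_i C_\ast(\phi_{\sigma,Y}^i) \xrightarrow{\ \cong\ } \Sigma^{p+1}C_\ast(\phi,X)$ is simply ``attach the root.'' So the additional ingredient your sketch needs is not a finer filtration but a single, direct appeal to the defining axiom of cubicality; once you make that appeal, the rest of your Step 2 goes through as written.
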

\begin{proof}  
We proceed by induction on $n$.  If $\phi$ has degree 1, the statements are vacuous, so we assume the statements for all morphisms of degree less than $n$. Fix a decomposition of $\phi$ into a chain of degree $1$ morphisms  $\phi= \phi_n\circ... \circ \phi_1$.  After Subsection $\ref{CFC}$, such a chain represents an equivalence class in $C_n^+(v_1\tdt v_r; v_0)$, which we denote by $[\phi_n\circ... \circ \phi_1]$.  For each $\sigma \in S_n$ we choose a representative of $\sigma [\phi_n\circ... \circ \phi_1]$, as a sequence of $n$ degree $1$ morphisms, which we denote as 
$\phi_{\sigma , \sigma^{-1}(n)}\circ ... \circ \phi_{\sigma , \sigma^{-1}(1)}$.  This notation is meant to reflect the case where $\phi_i$ contracts an edge of a graph, which by convention was the $i^{th}$ edge.  Then $\phi_{\sigma, i}$ contracts the same edge as $\phi_i$, but it does so in the order dictated by $\sigma$.  

Fix a nonempty subset $X\subset \{1\cdc n\}$ of size $p$ and let $Y:=\{1\cdc n\} \setminus X$.  Define
$S_{Y<X}:= \{\sigma \in S_n \ | \ \sigma(y) <\sigma (x) \ \forall \ y\in Y, x\in X \}.$
 An element in $S_{Y<X}$ specifies and is specified by a total order of the sets $X$ and $Y$.  Fix $\sigma \in S_{Y<X}$ and consider the associated chain of degree $1$ morphisms
 \begin{equation}\label{chain1}
 (\phi_{\sigma , x_p}\circ \dots \circ \phi_{\sigma , x_1})\circ (\phi_{\sigma , y_{n-p}}\circ ... \circ \phi_{\sigma , y_1}).
 \end{equation}  
  Define $\phi_{\sigma,X}$ and $\phi_{\sigma, Y}$ to be the composition of the morphisms indexed by $X$ and $Y$ respectively in Equation $\ref{chain1}$.  
 The morphism $\phi_{\sigma,X}$ is a basic morphism of degree $p$.    The morphism $\phi_{\sigma, Y}$ may not be a basic morphism.  Suppose it has outputs $v_1\tensor\dots\tensor v_b$, 
 then by the Feynman category axioms, $\phi_{\sigma, Y} =  \tensor_{i=1}^b \phi_{\sigma, Y}^i$ where each $\phi_{\sigma, Y}^i$ is a basic morphism of degree $\leq n-p < n$.

Any decomposition of $\phi$, for example the chain in Equation $\ref{chain1}$, determines a $\mathsf{V}$-colored tree by reading the composition as a flow chart.  The vertices in such a tree are in bijective correspondence with the morphisms in the chain and hence are totally ordered.  Passing to the quotient by even permutations of such an order, we may say such a chain determines an homogeneous element in $C_\ast(\phi)$.  Moreover this element is independent of the choice of representative of the chain, since the composition of degree $0$ morphisms in such a chain produces the same element in the coinvariants (see Equation $\ref{coinvariants}$).

With this in mind we define the chain complex $C_\ast(\phi, X) \subset C_{p+\ast}(\phi)^p/C_{p+\ast}(\phi)^{p-1}$
to be the span of all such homogeneous elements associated to compositions of morphisms inside the parentheses in the chain
\begin{equation}\label{chain}
\phi_{\sigma,X}\circ (\phi_{\sigma , y_r}\circ ... \circ \phi_{\sigma , y_1}),
\end{equation}
over all $\sigma \in S_{Y<X}$.  In particular, if $Y\neq \emptyset$ then $C_q(\phi,X)$ is concentrated between degrees $-2-p\geq q \geq -n-1$. This upper bound need not be achieved (unlike in Lemma $\ref{Plem}$), as the maximum non-zero dimension is spanned by $\phi_{\sigma , X}\circ \phi_{\sigma , Y}$, which has degree $-b-1-p$.  The complex $C_\ast(\phi,X)$ is to be understood as the span of trees whose root is labeled by a composition of the morphisms indexed by $X$. 

The fact that each $E^0_{p,\ast}$ splits follows from the fact that the $E^0$ differential does not alter the label of the root vertex.   
 To complete the proof of statement (1), define
\begin{equation}\label{iso}
\tensor_{i} C_\ast(\phi_{\sigma, Y}^i)\to \Sigma^{p+1} C_\ast(\phi,X) 
\end{equation}
to take a pure tensor of homogeneous vectors and add the root vertex labeled by $\phi_{X}$. Regarding the degrees, note that starting with a pure tensor in the source of cellular degree $\sum r_i = r$, the image satisfies $p+q=-(r+1)$ and hence the map in Equation $\ref{iso}$ has degree 0.

It is straight-forward to see that the map in $\ref{iso}$ is an isomorphism.  The terms in the differential are exactly the same (since the $E^0$ differential can't expand the root vertex) and the map is a bijection on the standard basis.  Suppose that each $\phi_{\sigma, Y}^i$ is a morphism of degree $n_i$.  By the induction hypothesis applied to statement (4) of the Lemma we conclude
\begin{equation*}
H_\ast(C_\ast(\phi,X)) \cong  \Sigma^{-p-1}(\tensor_i (\Sigma^{-n_i} k )) \cong \Sigma^{-p-1}(\Sigma^{p-n} k) \cong \Sigma^{-n-1} k
\end{equation*}
as graded vector spaces.  This establishes claim (1).  Claim (2) then follows immediately.  In particular, every homology class in $E^1_{p,-n-1}$ is represented up to scalar by the choice of a subset $X\subset \oneton$.

To complete the proof, observe that the terms in the differential $d^1\colon E^1_{p,-n-1}\to E^1_{p-1,-n-1}$ are indexed by ways to expel an element from the set $X$. By convention for the differential in the cobar construction, the signs are determined by adding the expelled element in the right-most position of the mod 2 ordered set $Y=X^c$.  These signs coincide with the signs in the differential of the odd simplicial chain complex of a simplex, and so invoking Equation $\ref{deltaiso}$ yields statement (3).  Finally we observe statement (4) follows immediately from statement (3).
\end{proof} 

\subsection{Conclusion of the proof.}
It remains to observe that Theorem $\ref{main}$ follows immediately from Lemma $\ref{widthfiltration}$.  From the lemma we know
	\begin{equation*}
H_r(C_\ast(\phi)) = \begin{cases} k & \text{ if } r=-n \\ 0 & \text{ else } \end{cases}.
\end{equation*}
Taking the homology of the morphism $\Omega(\mathbb{F}^\ast)(\vec{v})\to \mathbb{F}^!(\vec{v})$, yields the morphism of graded vector spaces:
\begin{equation*}
H_\ast(\Omega(\mathbb{F}^\ast)(\vec{v})) \cong \bigoplus_{\phi \in \mathsf{F}(\vec{v})} H_\ast(C_\ast(\phi)) \cong \bigoplus_{\phi \in \mathsf{F}(\vec{v})}\Sigma^{-|\phi|}k\to \mathbb{F}^!(\vec{v})
\end{equation*}
and since homology classes on the right hand side are represented by mod 2 orders of the set of degree $1$ morphisms, this map is not zero on each summand, and hence is an isomorphism as desired.

\section{From Permutohedra to Simplices Revisited.}

Let us revisit the surjective quasi-isomorphism $C_\ast(P_n)\twoheadrightarrow C_\ast(\Delta^
{n-1})$ of Corollary $\ref{PtoS}$.  This map may be viewed as an algebraic analog of blowing down a permutohedron to form a simplex, or dually truncating a simplex to form a permutohedron.  The following Lemma shows that the fiber of the Koszul map $C_\ast(\phi)$ over a degree $n$ morphism $\phi$ fits in-between, and may be combinatorially realized both as a blow-up of a simplex and as a blow-down of a permutohedron.  In particular, the permutohedron may be considered the maximal possible fiber.  A particular example in the case of cyclic (or modular) operads is illustrated in Figure $\ref{fiber}$ and $\ref{fiber2}$, see \cite{WardMP} for more details in that case.  In general we have the following result:

\begin{figure}
	\centering
	\includegraphics[scale=.75]{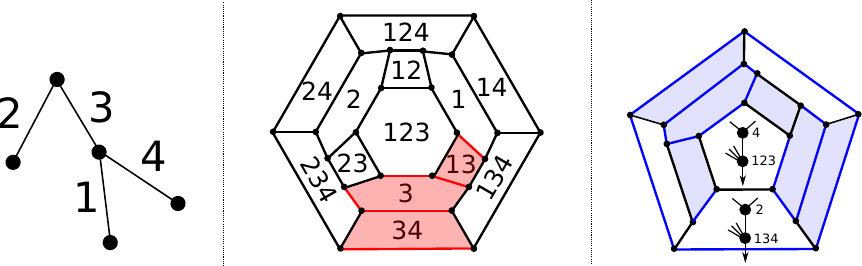}
	\caption{
		Left: A degree 4 morphism $\phi$ in the Feynman category encoding cyclic operads, plus an order on the set of edges. Center: the cell complex of the 3d permutohedron $P_4$ (in top down view).  Only the least block of the codimension 1 cells are labeled, with the label of the ``bottom'' hexagon $\{1,2,3\} >\{4\}$ suppressed.  Right: the complex $C_\ast(\phi)$ with most labels suppressed.  The kernels of the maps in Lemma $\ref{maps}$ are colored red and blue respectively.}	%
	\label{fiber}
\end{figure}

\begin{lemma}\label{maps}  For every $\phi$ there are surjective quasi-isomorphisms of filtered complexes $
	C_\ast(P_n) \stackrel{\sim}\twoheadrightarrow 
	\Sigma^{n}C_\ast(\phi) \stackrel{\sim}\twoheadrightarrow C_\ast(\Delta^
	{n-1}) $
	factoring the morphism in Corollary $\ref{PtoS}$.
\end{lemma}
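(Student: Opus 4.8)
The plan is to write down the two maps explicitly, verify that each is a surjection of \emph{filtered} complexes, check that $\pi_2\circ\pi_1$ is the collapse map of Corollary \ref{PtoS}, and then get the quasi-isomorphism statements essentially for free. The free part relies on the following elementary observation: if $A\xrightarrow{f}B\xrightarrow{g}C$ are chain maps with $g\circ f$ a quasi-isomorphism and all of $A,B,C$ have the homology of a point, then $f$ and $g$ are quasi-isomorphisms, because on $H_0$ one has a factorization of an isomorphism $k\to k\to k$ forcing both factors to be isomorphisms, and there is nothing in other degrees. Here $H_\ast C_\ast(P_n)$ is that of a point by Corollary \ref{PtoS}, $H_\ast C_\ast(\Delta^{n-1})$ classically, and $H_\ast\Sigma^nC_\ast(\phi)$ by Lemma \ref{widthfiltration} (equivalently Theorem \ref{main}). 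Alternatively one checks directly that both maps induce on $E^1$-pages the identity of the simplicial chain complex of $\Delta^{n-1}$, using Lemmas \ref{Plem} and \ref{widthfiltration}(3); this exhibits them as filtered quasi-isomorphisms.

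\textbf{Construction of $\pi_1$.} Fix a decomposition of $\phi$ into degree $1$ morphisms. Send a totally ordered partition $B_1>\dots>B_k$ of $\oneton$ to the tree obtained by peeling blocks off starting from the least: $B_k$ becomes the root, labelled by the basic morphism $\phi_{B_k}$ of degree $|B_k|$ (well defined by the cubical compatibility axioms, exactly as in the proof of Lemma \ref{widthfiltration}); the residual composite $\phi_{B_1\cup\dots\cup B_{k-1}}$ is a tensor of basic morphisms, one per branch at the root, and one recursively attaches the remaining blocks, each carving a new vertex off the branch it lies in. If at some stage a block is not contained in a single branch, put $\pi_1(B_1>\dots>B_k)=0$; otherwise the value is the resulting labelled tree, with the mod $2$ order induced by the order of peeling and the sign dictated by the cobar ``expanded edge last'' convention. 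One then checks: $\pi_1$ is independent of the chosen decomposition, via the $S_{n-p}\times S_p$ versus $S_n$ compatibility of a cubical Feynman category; it is a chain map, subdivision of a block matching expansion of a tree edge and both sides vanishing in the incompatible case; it is filtered, since width $|B_k|$ maps to width $\deg\phi_{B_k}=|B_k|$; and it is surjective, since any tree in $C_\ast(\phi)$ is hit up to sign by any totally ordered partition whose blocks are its vertex index-sets ordered by a linear extension of the root-pointing partial order on vertices. The multiplicity of preimages is precisely the sense in which $C_\ast(\phi)$ is a blow-down of $P_n$.

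\textbf{Construction of $\pi_2$ and the factorization.} The map $\pi_2$ is the analog for $C_\ast(\phi)$ of the permutohedral collapse: a tree cell goes to zero unless every non-root vertex has degree $1$, in which case it goes to the face of $\Delta^{n-1}$ on the index-set $X$ of the degree $|X|$ root morphism $\phi_X$, with sign from the mod $2$ order; equivalently, as $\pi_1$ is surjective, $\pi_2$ is the unique map with $\pi_2\circ\pi_1=(\text{Corollary }\ref{PtoS}\text{ map})$. The index-set is well defined because $\phi=\phi_X\circ\phi_Y$ determines $Y$ hence $X$, the freeness of the $S_n$-action keeping the $n$ degree $1$ constituents distinguishable. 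That $\pi_2$ is a filtered chain map is governed by Lemma \ref{widthfiltration}(3): on these cells the cobar differential is ``split one element off the root's index-set'', which is the simplicial boundary, while terms of the differential leaving this locus are killed and the $d^2=0$ cancellations pin the surviving signs to the simplicial ones. Surjectivity is clear, the composite agrees with the Corollary \ref{PtoS} map by construction, and the quasi-isomorphisms follow from the principle of the first paragraph.

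\textbf{Main obstacle.} The real work is the chain-map verification for $\pi_1$ and the parallel one for $\pi_2$: tracking the Koszul sign attached to block subdivision in $C_\ast(P_n)$ against the $Det$ / mod $2$-order sign attached to edge expansion in the cobar complex, and confirming that subdivisions incompatible with the tree structure vanish on both sides (in particular, that distinct permutohedral cells mapping to $\pm$ the same tree cause exactly the cancellations forced by the cobar $d^2=0$). This is the same species of bookkeeping as in the proof that $C_\ast(P_n)$ is a complex and in the $E^0$/$E^1$ analysis behind Lemma \ref{Plem}; it is routine in spirit but is where the content sits. Once the maps are in hand, filtered-ness, surjectivity, the factorization, and hence the quasi-isomorphisms follow quickly from what has already been proved.
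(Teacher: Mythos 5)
Your proposal follows essentially the same route as the paper: construct the two maps explicitly (your ``peel blocks from the root'' description of $\pi_1$ and the paper's ``choose a refining total order, form the maximally fine tree, then contract edges within blocks'' are two phrasings of the same map, with the same vanishing locus, and your $\pi_2$ is verbatim the paper's), check they are filtered chain surjections factoring the collapse of Corollary~\ref{PtoS}, and then deduce the quasi-isomorphisms from the fact that all three complexes have the homology of a point (your explicit $k\to k\to k$ observation is exactly what underlies the paper's appeal to two-out-of-three). You correctly identify the chain-map verification for $\pi_1$ -- in particular the cancellation in pairs of the differential terms of a cell that maps to zero -- as where the real content lies, though you defer it; that is precisely the step the paper spends its two substantive paragraphs on.
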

\begin{proof}  
	As above, $\phi$ is a degree $n$ basic morphism in a cubical Feynman category and we fix a decomposition $\phi = \phi_n\circ\dots\circ\phi_1$ into degree $1$ morphisms.	Here $C_\ast(P_n)$ and
	$\Sigma^{n}C_\ast(\phi)$ are both filtered by width, as above, and $C_\ast(\Delta^{n-1})$ is filtered by the number of elements in the set corresponding to a cell (which is just the cellular degree + 1).

The map $\Sigma^{n}C_\ast(\phi) \to C_\ast(\Delta^{n-1})$ is defined as follows. From the proof of Lemma $\ref{widthfiltration}$, the complex $C_\ast(\phi)$ is spanned by trees labeled by basic morphisms, and each such tree belongs to some complex $C_\ast(\phi,X)$ for some subset $X\subset \oneton$.  If such a tree has $n-p$ edges we first map it to the chain in $C^-_\ast(\Delta^{n-1})$ corresponding to the complement of $X$, else we define its image to be $0$.  The fact that this is a surjective quasi-isomorphism then follows immediately from Lemma $\ref{widthfiltration}$.  We then compose with the isomorphism of Equation $\ref{deltaiso}$ as in Corollary $\ref{PtoS}$.

The filtered quasi-isomorphism $C_\ast(P_n)\to \Sigma^{n}C_\ast(\phi)$ is defined as follows. Let $\alpha$ be a cell of degree $r$, i.e.\ a TO partition of $\oneton$ with $n-r$ blocks. Choose a compatible total order of $\oneton$ by ordering the elements of each block.  To this total order we associate the permutation $\sigma\in S_n$ and hence a chain of morphisms $\sigma(\phi_n\circ\dots\circ\phi_1)$.  Reading this chain of morphisms as a flow chart gives us an element in $C_{-n}(\phi)$ for which each vertex is labeled $\phi_{\sigma, i}$.  Contract all tree edges which join vertices labeled by $\phi_{\sigma, i}$ and $\phi_{\sigma, j}$ for which $i$ and $j$ were in the same block of $\alpha$.  Call the resulting element $\mathsf{t}_\alpha \in C_\ast(\phi)$.  Observe that $\mathsf{t}_\alpha$ does not depend on the choice of total order of each block of $\alpha$. 

We then define $C_\ast(P_n)\to \Sigma^{n}C_\ast(\phi)$ by sending $\alpha$ to $\mathsf{t}_\alpha$ if $\mathsf{t}_\alpha \in C_{r-n}(\phi)$ and to $0$ otherwise.   Intuitively, each vertex of $\mathsf{t}_\alpha$ is labeled by a composition of morphisms $\phi_{\sigma, \sigma^{-1}(i)}$ and so specifies a partition of $\oneton$.  The map is non-zero precisely when this partition agrees with $\alpha$ and is zero if it is finer than $\alpha$.

This map is surjective and preserves width.  Let's show this map is dg.  First, suppose $\alpha$ maps to $0$.  Then the partition of $\oneton$ given by the vertex labels of $\mathsf{t}_\alpha$ is finer than $\alpha$.  In this case all differential terms of $\alpha$ also map to $0$, with one possible exception.  Namely, if $\mathsf{t}_\alpha$ has exactly one additional block, then there are two terms in $d(\alpha)$ whose image is non-zero.  They are given by splitting the block of $\alpha$ according to $\mathsf{t}_\alpha$ in both possible orders.  However, since these map to the same tree but with a transposed edge order, their sum maps to zero as desired.

Now consider an $\alpha$ which does not map to $0$. 
On the one hand, every term of $d(\mathsf{t}_\alpha)$ corresponds to a unique term in $d(\alpha)$.  Conversely terms in $d(\alpha)$ which do not appear in $d(\mathsf{t}_\alpha)$ must map to zero.
Indeed, any such term in $d(\alpha)$ sub-divides a block and if such a term maps to something non-zero then,  considering a total order on this block compatible with this sub-division, it must be the case that the two sub-blocks are joined by an edge in the associated tree, and hence correspond to a term in $d(\mathsf{t}_\alpha)$.  Hence this map is dg.

The composition of the two maps constructed above gives a map $C_\ast(P_n) \to C_\ast(\Delta_{n-1})$ which sends a TO partition to the cell indexed by the complement of its least block, and hence agrees with the map given in Corollary $\ref{PtoS}$.  Finally, we observe that the these two maps and their composition are all quasi-isomorphisms by the $2$ out of $3$ property.  \end{proof}

When $\phi$ is a graph of vertices and edges arranged in a line, the fiber $C_\ast(\phi)$ is an associahedron and this map from $P_n$ was constructed by Tonks in \cite{Tonks}.

\begin{figure}
	\centering
	\includegraphics[scale=.75]{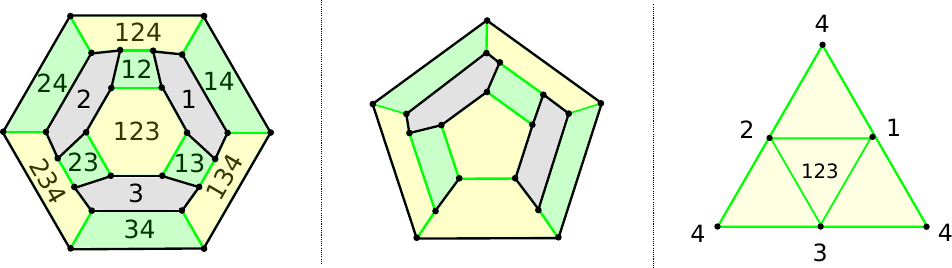}
	\caption{  The maps in Lemma $\ref{maps}$ respect the width filtration.  Left and center are as in Figure $\ref{fiber}$; right is the simplex $\Delta^3$.  Width 1 is in black, width 2 is in green, width 3 in yellow. }
	\label{fiber2}
\end{figure}

\section{Outlook.} 
Having established that the linearization $\mathbb{F}$ of a cubical Feynman category $\mathsf{F}$ is Koszul, we define $\mathbb{F}_\infty:= \Omega((\mathbb{F}^!)^\ast)$ and refer to an $\mathbb{F}_\infty$-op as a strongly homotopy $\mathsf{F}$-op or a weak $\mathsf{F}$-op.  Since $\mathbb{F}$ is Kozsul, the category of weak $\mathsf{F}$-ops admits a homotopy transfer theory and a bar-cobar duality in analogy with the classical setting.  With this in mind we offer, by way of conclusion, several potential applications and directions for future study.


{\bf I.} Consider $\mathsf{F}$ to be the Koszul Feynman category encoding the colored operad which in turn encodes the swiss cheese operad.  Since the swiss cheese operad is known not to be formal \cite{Livernet}, its homology must carry higher operations which assemble to a weak $\mathsf{F}$-op and encode its rational homotopy type.  It would be interesting to determine what concretely can be said about them.

{\bf II.} Define a weak $\mathfrak{K}$-twisted $\mathsf{F}$-op as a representation of $(\mathbb{F}^!)_\infty:= \Omega(\mathbb{F}^\ast)$.  Such a representation is a functor whose limit should carry an odd $L_\infty$ algebra, generalizing the odd Lie structure in the strict case.  One can ask if the Maurer-Cartan functor is still representable in the weak case via the bar construction of the monoidal unit.  In the presence of a non-connected multiplication, this $L_\infty$-algebra may be upgraded to a class of $\mathsf{BV}_\infty$ algebras, generalizing \cite{KWZ}.

	
{\bf III.} There are cubical Feynman categories whose morphisms are not graphs in any naive sense such the Feynman category encoding associative algebras, for which our Koszulity result is equivalent to the contractibility of associahedra \cite{Stasheff}. It would be interesting to investigate the cubicality condition in related examples, such as shuffle algebras, permutads and twisted associative algebras.  The presentation of permutads given in \cite[Definition 3]{Permutads} could be used to establish cubicality, which would provide an alternate approach to \cite[Theorem B]{Permutads}.

{\bf IV.} More generally, Batanin and Markl have identified in \cite{MB3} a large class of their operadic categories which are Koszul, see \cite[Theorem 11.5]{MB3}.  A precise comparison of their criteria with our cubical hypothesis, as well as a translation of the notion of cubicality into the language of operadic categories would be very desirable.

{\bf V.} The connection to the permutahedral decomposition of Cacti \cite{KZhang} and the $B_+$ and $B_-$ operators is intriguing. This should provide connections to Steenrod operations via the formulation \cite{KMM}.
	
{\bf VI.} The connection to the plus constructions \cite{feynmanrep} and W--constructions, \cite{BM,KW} will be studied in forthcoming work.


\end{document}